\DeclareMathOperator{\Spec}{Spec}
\DeclareMathOperator{\Hom}{Hom}
\DeclareMathOperator{\Id}{Id}
\DeclareMathOperator{\Def}{Def}
\DeclareMathOperator{\End}{End}
\DeclareMathOperator{\coker}{coker}
\DeclareMathOperator{\MC}{MC}
\DeclareMathOperator{\imm}{Im}
\newcommand{\C}{\mathbb{C}}
\newcommand{\N}{\mathbb{N}}
\newcommand{\M}{\mathcal{M}}
\newcommand{\A}{\mathcal{A}}
\newcommand{\deltabar}{\bar{\partial}}
\newcommand{\Eps}{\mathcal{E}}
\newcommand{\F}{\mathcal{F}}
\newcommand{\G}{\mathcal{G}}
\renewcommand{\H}{\mathcal{H}}
\newcommand{\U}{\mathcal{U}}
\newcommand{\Oh}{\mathcal{O}}
\newtheorem{theorem}{Theorem}[section] 
\newtheorem{lemma}[theorem]{Lemma}
\newtheorem{cor}[theorem]{Corollary}
\newtheorem{prop}[theorem]{Proposition}
\newtheorem{definition}[theorem]{Definition}
\theoremstyle{remark}
\newtheorem{remark}[theorem]{Remark}
\begin{document}

\title{Local structure of Brill-Noether strata \\ in the moduli space of flat stable bundles}
\date{}

\author{Elena Martinengo}
\email{martinengo@mat.uniroma1.it}
\urladdr{www.mat.uniroma1.it/dottorato/}

\begin{abstract}
We study the Brill-Noether stratification of the coarse moduli space of locally free stable and flat sheaves of a compact K\"{a}hler manifold, proving that these strata have quadratic algebraic singularities.  
\end{abstract}

\subjclass{14H60, 14D20, 13D10, 17B70}
\keywords{Vector bundles on curves and their moduli, Differential graded Lie algebras, functors of Artin rings}

\maketitle

\section{Introduction}
Let $X$ be a compact complex K\"{a}hler manifold of dimension $n$. Let $\M$ be the moduli space of locally free sheaves of $\Oh_X$-modules on $X$ which are stable and flat.
It is known that a coarse moduli space of these sheaves can be constructed and that it is a complex analytic space (see \cite{Norton} or \cite{Le Poitier} for the algebraic case). Moreover the following result, proved in \cite{Nadel} and \cite{Goldman-Millson 1}, determines the type of singularities of this moduli space:  
\begin{theorem} \label{teo per M}
The moduli space $\M$ has quadratic algebraic singularities.
\end{theorem}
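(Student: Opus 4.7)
The plan is to apply the now-standard dictionary between deformation functors of geometric objects and differential graded Lie algebras (DGLAs). To a flat locally free sheaf $\Eps$ on $X$ I would associate the DGLA
\[
L^\bullet \;=\; \bigoplus_{k\geq 0} A^{k}\bigl(X,\End(\Eps)\bigr),
\]
where $A^k(X,\End(\Eps))$ are the global smooth $k$-forms with values in $\End(\Eps)$, the differential is the connection $d_\nabla$ induced by the flat connection on $\Eps$, and the bracket is the graded commutator obtained by wedging forms and composing endomorphisms. A first step, which is classical, is to identify the functor of Artin rings $\Def_\Eps$ classifying infinitesimal deformations of $\Eps$ as a flat locally free sheaf with the Maurer--Cartan / gauge functor $\Def_L$ associated to $L^\bullet$; under this isomorphism the germ of $\M$ at $[\Eps]$ is a Kuranishi slice for $\MC(L)$ modulo gauge.

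The core of the argument is to prove that $L^\bullet$ is \emph{formal}, i.e.\ quasi-isomorphic as a DGLA to its cohomology $H^\bullet(L)$ endowed with the zero differential and the bracket induced by the one of $L$. This is the Goldman--Millson/Nadel theorem and is the step where the K\"ahler hypothesis on $X$ enters in an essential way. The idea is to use Hodge theory for the flat bundle $\End(\Eps)$ on the K\"ahler manifold $X$: decomposing $d_\nabla = d'+d''$ according to the Higgs/antiholomorphic components and invoking the $d'd''$-lemma (equivalently the existence of a K\"ahler identity relating the two Laplacians), one constructs a chain of DGLA quasi-isomorphisms
\[
L^\bullet \;\longleftarrow\; \ker(d'') \;\longrightarrow\; H^\bullet(L),
\]
analogous to the Deligne--Griffiths--Morgan--Sullivan formality of the de Rham algebra of a K\"ahler manifold. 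Both arrows are DGLA maps (the right one because the harmonic subspace is closed under the bracket thanks to the K\"ahler identities), and both induce isomorphisms on cohomology, so $L$ is formal.

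Once formality is established, the conclusion is essentially formal: since quasi-isomorphic DGLAs have isomorphic deformation functors, $\Def_\Eps \cong \Def_{H^\bullet(L)}$. But $H^\bullet(L)$ has zero differential, so its Maurer--Cartan equation reduces to the purely quadratic relation $[x,x]=0$ for $x\in H^1(L)$. Hence an analytic model for $\M$ near $[\Eps]$ is the germ at $0$ of the affine cone
\[
\bigl\{\, x\in H^1\bigl(X,\End(\Eps)\bigr) \;\bigl|\; [x,x]=0\,\bigr\}
\]
modulo the linear action of $H^0(X,\End(\Eps))$, which is a scheme cut out by finitely many homogeneous quadratic equations. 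This is exactly the statement that $\M$ has quadratic algebraic singularities. The main obstacle in the whole scheme is the formality statement, whose proof genuinely requires the K\"ahler identities and Hodge-theoretic splittings for the twisted operator $d_\nabla$; all other steps are formal manipulations within the theory of DGLAs and deformation functors.
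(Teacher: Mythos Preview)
Your proposal is correct and essentially reconstructs the Goldman--Millson argument. Note, however, that the paper does \emph{not} give its own proof of this theorem: it is quoted in the introduction as a known result, with proofs attributed to Nadel (explicit construction of the Kuranishi family) and to Goldman--Millson (study of the germ prorepresenting the deformation functor). Your outline is precisely the second of these.

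It is worth comparing your scheme with the machinery the paper deploys later for its Main Theorem, since that is the closest the paper comes to proving a statement of this type. You work with the de~Rham DGLA $(A^\bullet(X,\End\Eps),d_\nabla)$ and invoke the $d'd''$-lemma for the flat bundle to obtain formality directly, in the spirit of Deligne--Griffiths--Morgan--Sullivan. The paper instead works throughout with the Dolbeault DGLA $A_X^{(0,*)}(\End\Eps)$ governing deformations of the holomorphic structure, and replaces the $d'd''$-lemma by Siu's theorem that on a K\"ahler manifold the two Laplacians $\square$ and $\overline{\square}$ for a flat bundle coincide; this is what makes the harmonic subspace $\H_X^{(0,*)}(\End\Eps)$ closed under the bracket and hence a sub-DGLA with zero differential. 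The two routes are equivalent in content (both encode the K\"ahler identities for the flat endomorphism bundle), but the paper's Dolbeault formulation is tailored to the sheaf-theoretic viewpoint it needs for the Brill--Noether refinement.
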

In his article \cite{Nadel}, Nadel constructs explicitly the Kuranishi family of deformations of a stable and flat locally free sheaf of $\Oh_X$-modules $\Eps$ on $X$ and he proves that the base space of this family has quadratic algebraic singularities. 
Whereas the proof given by Goldman and Millson in \cite{Goldman-Millson 1} is based on the study of a germ of analytic space which prorepresents the functor of infinitesimal deformations of a sheaf $\Eps$. They find out this analytic germ and prove that it has quadratic algebraic singularities. \\

This paper is devoted to the local study of the strata of the Brill-Noether stratification of the moduli space $\M$ and in particular, in the same spirit as Theorem \ref{teo per M}, to the determination of their type of singularities. 

In section \ref{type of singularities} we study an equivalence relation between germs of analytic spaces under which they are said to have the same type of singularities. We  prove that this relation is formal (Proposition \ref{prop formalita}) and that the set of germs with quadratic algebraic singularities is closed under this relation (Theorem \ref{teo sing quadr}).

In section \ref{def di N} we introduce the Brill-Noether stratification of the moduli space $\M$, we are interested in.
The subsets of this stratification are defined in the following way. We fix integers $h_i\in \mathbb{N}$, for all $i= 0\ldots n$, and we consider the subspace $\mathcal N (h_0\ldots h_n) \subset \M$ of stable and flat locally free sheaves of $\Oh_X$-modules on $X$, with cohomology spaces dimensions  fixed: $\dim H^i=h_i$, for all $i= 0\ldots n$. 
Our aim is to study the local structure of these strata $\mathcal N (h_0\ldots h_n)$, proving the following 
\begin{theorem}[Main Theorem]
The Brill-Noether strata $\mathcal N(h_0\ldots h_n)$ have quadratic algebraic singularities.
\end{theorem} 

In sections \ref{def di N} and \ref{F con DGLA},
we define and study the functor $\Def_{\Eps}^0$ of infinitesimal deformations of a stable and flat locally free sheaf of $\Oh_X$-modules $\Eps$ on $X$, such that $H^i(X,\Eps)=h_i$ for all $i=0\ldots n$, which preserve the dimensions of cohomology spaces. 

In section \ref{proof of the main theorem}, we find out an other functor linked to $\Def_{\Eps}^0$ by smooth morphisms and for which it is easy to find a germ which prorepresents it. 
Then the Main Theorem follows from the formal property of the relation of have the same type of singularities and from the closure of the set of germs with quadratic algebraic singularities with respect to this relation.

\section{Singularity type} \label{type of singularities}
Let $An$ be the category of analytic algebras and let $\hat An$ be the category of complete analytic algebras. 
We recall that an \emph{analytic algebra} is a $\C$-algebra which can be written in the form $\C\{x_1\ldots x_n\}/I$ and a morphism of analytic algebras is a local homomorphism of $\C$-algebras. 
\begin{definition} 
A homomorphism of rings $\psi:R\rightarrow S$ is called \emph{formally smooth} if, for every exact sequence of local artinian $R$-algebras: 
$ 0\rightarrow I\rightarrow B\rightarrow A\rightarrow 0$, such that $I$ is annihilated by the maximal ideal of $B$, the induced map 
$\Hom_{R}(S,B)\rightarrow \Hom_{R}(S,A)$ is surjective.
\end{definition}

\noindent We recall some facts about formally smooth morphisms of analytic algebras, which we use in this section. We start with the following equivalence of conditions (see \cite{Sernesi libro}, Proposition C.50):

\begin{prop} \label{smooth sernesi}
Let $\psi: R\rightarrow S $ be a local homomorphism of local noetherian $\C$-algebras conteining a field isomorphic to their residue field $\C$. Then the following conditions are equivalent:
\begin{itemize}
\item[-] $\psi$ is formally smooth,
\item[-] $\hat S$ is isomorphic to a formal power series ring over $\hat R$,
\item[-] the homomorphism $\hat \psi:\hat R\rightarrow \hat S$ induced by $\psi$ is formally smooth.
\end{itemize}
\end{prop}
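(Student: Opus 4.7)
The plan is to establish the cyclic chain of implications: formal smoothness of $\psi$ $\Leftrightarrow$ formal smoothness of $\hat\psi$, and formal smoothness of $\hat\psi$ $\Leftrightarrow$ $\hat S$ being a formal power series ring over $\hat R$. The first equivalence is essentially formal bookkeeping: every local homomorphism from $S$ (resp.\ $\hat S$) into a local artinian $\mathbb{C}$-algebra $A$ factors through $S/\mathfrak{m}_S^N = \hat S/\mathfrak{m}_{\hat S}^N$ for $N$ large enough, and every artinian local $R$-algebra structure on $A$ canonically extends to an $\hat R$-algebra structure, so $\Hom_R(S,A) \simeq \Hom_{\hat R}(\hat S,A)$. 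Hence the lifting condition defining formal smoothness reads the same way for $\psi$ and $\hat\psi$.

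The implication ``$\hat S \cong \hat R[[x_1,\ldots,x_n]]$ implies $\hat\psi$ formally smooth'' is the easy half of the remaining equivalence: a local $\hat R$-algebra homomorphism from a formal power series ring to an artinian $A$ is determined by the images of the $x_i$ in $\mathfrak{m}_A$, and any preimages of these images in $\mathfrak{m}_B$ under a small surjection $B \to A$ define a lift. This is just the fact that formal power series rings are free in the appropriate complete sense.

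The heart of the proof is the opposite implication. I would choose elements $s_1,\ldots,s_n \in \mathfrak{m}_{\hat S}$ whose classes form a $\mathbb{C}$-basis of the relative cotangent space $\mathfrak{m}_{\hat S}/(\mathfrak{m}_{\hat R}\hat S + \mathfrak{m}_{\hat S}^2)$, and define a local $\hat R$-algebra map $\Phi\colon T := \hat R[[x_1,\ldots,x_n]] \to \hat S$ by $x_i \mapsto s_i$. Since $\hat S$ is $\mathfrak{m}_{\hat S}$-adically complete and the $s_i$ together with $\hat\psi(\mathfrak{m}_{\hat R})$ generate $\mathfrak{m}_{\hat S}$ (Nakayama applied to the cotangent space), $\Phi$ is surjective. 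To produce a two-sided inverse, I would construct compatible local $\hat R$-algebra maps $\sigma_k\colon \hat S \to T/\mathfrak{m}_T^k$ inductively, applying formal smoothness of $\hat\psi$ to the square-zero extensions $T/\mathfrak{m}_T^{k+1} \to T/\mathfrak{m}_T^k$, and arranging at each step that $\sigma_k$ be a section of the reduction $\bar\Phi_k\colon T/\mathfrak{m}_T^k \to \hat S/\mathfrak{m}_{\hat S}^k$. Passing to the inverse limit yields $\sigma\colon \hat S \to T$ with $\Phi\circ\sigma = \Id_{\hat S}$; since $\Phi$ induces an isomorphism on cotangent spaces, so does $\sigma$, whence $\sigma$ is itself surjective by the same complete-Nakayama argument, hence an isomorphism, forcing $\Phi$ to be an isomorphism.

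The main obstacle is the inductive step in the last paragraph: an arbitrary lift provided by formal smoothness need not respect the section condition $\bar\Phi_{k+1}\circ\sigma_{k+1} = \pi_{k+1}$, so one must correct the candidate by an $\hat R$-derivation valued in $\mathfrak{m}_T^k/\mathfrak{m}_T^{k+1}$. The existence of such a correction relies precisely on the choice of the $s_i$ as a basis of the relative cotangent space, which guarantees the surjectivity of $\Phi$ on each graded piece $\mathfrak{m}_T^k/\mathfrak{m}_T^{k+1} \to \mathfrak{m}_{\hat S}^k/\mathfrak{m}_{\hat S}^{k+1}$. Once this point is dealt with, compatibility in the inverse limit and the final conclusion that $\Phi$ is bijective follow by standard completeness arguments.
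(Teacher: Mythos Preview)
The paper does not actually prove this proposition: it is stated with the reference ``see \cite{Sernesi libro}, Proposition C.50'' and no argument is given. So there is no ``paper's own proof'' to compare against; the statement is used as a black box throughout Section~\ref{type of singularities}.

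Your argument is the standard one and is essentially correct. The identification $\Hom_R(S,A)\cong\Hom_{\hat R}(\hat S,A)$ for local artinian $A$ is exactly right and disposes of the equivalence $\psi$ formally smooth $\Leftrightarrow$ $\hat\psi$ formally smooth. For the remaining implication, your construction of $\Phi:T=\hat R[[x_1,\dots,x_n]]\to\hat S$ and the inductive lifting of a section $\sigma$ using formal smoothness, together with the derivation-correction to force $\bar\Phi_{k+1}\circ\sigma_{k+1}=\pi_{k+1}$, is the right idea. One small point deserves a cleaner justification: at the end you say that $\sigma$ induces an isomorphism on cotangent spaces ``since $\Phi$ does'', but $\Phi\circ\sigma=\Id$ only gives directly that $\bar\sigma$ is a one-sided inverse of $\bar\Phi$. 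The cleanest way to finish is to observe that $\sigma\circ\Phi$ is an $\hat R$-algebra endomorphism of $T=\hat R[[x_1,\dots,x_n]]$ which, by your choice of the $s_i$, induces an isomorphism on the \emph{relative} cotangent space $\mathfrak{m}_T/(\mathfrak{m}_{\hat R}T+\mathfrak{m}_T^2)$; such an endomorphism of a formal power series ring is automatically an automorphism, so $\sigma$ is surjective and hence an isomorphism. With this adjustment the proof is complete.
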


\noindent Furthermore, we recall two Artin's important results (see \cite{A2}, Theorem 1.5a and Corollary 1.6):

\begin{theorem} \label{Artin's theorem}
Let $R$ and $S$ be analytic algebras and let $\hat R$ and $\hat S$ be their completions.  
Let $\bar\psi:R\rightarrow \hat S$ be a morphism of analytic algebras, then, for all $n\in \mathbb N$, there exists a morphism of analytic algebras $\psi_n:R\rightarrow S$, such that the following diagram is commutative:  
$$
\xymatrix{ R \ar[r]^-{\psi_n} \ar@{=}[d]^{\Id} & S \ar[r]^-{\pi_n} & S/ \mathfrak{m}^n_S \ar@{=}[d]^{\Id}\\
           R \ar[r]^-{\bar\psi} & \hat S \ar[r]^-{\pi_n}& S/ \mathfrak{m}^n_S  .  } $$
\end{theorem}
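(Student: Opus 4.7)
The plan is to interpret the statement as an instance of Artin's approximation theorem for systems of analytic equations, which is really the substantive content, and then reduce the morphism-approximation formulation to the equation-approximation formulation.

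First, present $R$ as an analytic quotient: since $R$ is an analytic algebra I can write $R = \C\{x_1,\ldots,x_m\}/(f_1,\ldots,f_k)$ with $f_j \in \C\{x_1,\ldots,x_m\}$. Then for any local analytic $\C$-algebra $T$, giving a local homomorphism $R \to T$ is the same as giving an $m$-tuple $(y_1,\ldots,y_m) \in \mathfrak{m}_T^m$ with $f_j(y_1,\ldots,y_m) = 0$ in $T$ for every $j$ (localness forces the $y_i$ into $\mathfrak{m}_T$). Applied to $T = \hat S$, the given morphism $\bar\psi$ corresponds to a tuple $\bar y = (\bar y_1,\ldots,\bar y_m) \in \mathfrak{m}_{\hat S}^m$ satisfying $f_j(\bar y) = 0$ in $\hat S$, with $\bar y_i = \bar\psi(\bar x_i)$.

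The goal is then to produce, for each $n$, an analytic tuple $y^{(n)} = (y_1^{(n)},\ldots,y_m^{(n)}) \in \mathfrak{m}_S^m$ satisfying $f_j(y^{(n)}) = 0$ in $S$ and $y_i^{(n)} \equiv \bar y_i \pmod{\mathfrak{m}_{\hat S}^n}$. Such a tuple defines the required analytic morphism $\psi_n:R\to S$ by $\bar x_i \mapsto y_i^{(n)}$, and the commutativity of the diagram is automatic: the two composites $\pi_n \circ \psi_n$ and $\pi_n \circ \bar\psi$ agree on each generator $\bar x_i$ by construction. Now I invoke Artin's approximation theorem in its standard form: given analytic functions $f_j \in \C\{Y_1,\ldots,Y_m\}$, every formal solution $\bar Y \in \hat S^m$ of the system $f_j(Y) = 0$ can, for every $n \in \N$, be approximated modulo $\mathfrak{m}_S^n$ by an honest analytic solution $Y \in S^m$. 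This yields the tuple $y^{(n)}$ and completes the reduction.

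The hardest step is, unsurprisingly, the approximation theorem itself. Its classical proof proceeds by reducing via Weierstrass preparation and elementary manipulations of the system to a situation where the Jacobian of the $f_j$ has maximal rank along the formal solution, and then applying the analytic implicit function theorem to convert the formal solution into a nearby analytic one; the reduction uses a Noetherian induction on an invariant attached to the equations. Taking that result as given, the translation between morphisms $R\to(\cdot)$ and solutions of the defining equations of $R$ delivers the theorem as stated; Corollary 1.6 of Artin's paper is the precise packaging in the language of morphisms used here.
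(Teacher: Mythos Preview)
The paper does not prove this statement at all: it is introduced with ``we recall two Artin's important results (see \cite{A2}, Theorem 1.5a and Corollary 1.6)'' and simply quoted. Your proposal correctly identifies it as Artin's approximation theorem and supplies the standard translation between local morphisms $R\to T$ and $T$-valued solutions of the defining equations of $R$; this reduction is exactly how Artin passes between the equation form (his Theorem~1.2) and the morphism form (his Theorem~1.5a), so your write-up is in line with the source the paper cites. One small correction: the morphism-approximation statement here is Artin's Theorem~1.5a, not Corollary~1.6 (the latter is the isomorphism statement used for the paper's Corollary~\ref{Artin's cor}).
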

\begin{cor} \label{Artin's cor}
With the notation of Theorem \ref{Artin's theorem}, if in addition $\bar \psi$ induces an isomorphims $\hat{\bar\psi}: \hat R\rightarrow \hat S$, then $\psi_n$ is an isomorphism, provided $n\geq 2$.  
\end{cor}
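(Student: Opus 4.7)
The strategy is to reduce the statement to a question about the completions $\hat R$ and $\hat S$, where standard Nakayama-type and noetherianness arguments apply, and then to descend back to the analytic algebras using Proposition~\ref{smooth sernesi}. The commutative diagram of Theorem~\ref{Artin's theorem} tells us that $\psi_n$ and $\bar\psi$ coincide modulo $\mathfrak{m}_S^n$; in particular, for $n\geq 2$, they induce the same $\C$-linear map on the Zariski cotangent spaces $\mathfrak{m}_R/\mathfrak{m}_R^2\to\mathfrak{m}_S/\mathfrak{m}_S^2$, namely the one induced by $\hat{\bar\psi}$. Since $\hat{\bar\psi}$ is an isomorphism by hypothesis, this cotangent map is an isomorphism.

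Passing to the completions, the induced morphism $\hat\psi_n:\hat R\to\hat S$ produces the same isomorphism on cotangent spaces. By Nakayama's lemma in the complete local noetherian setting, $\hat\psi_n$ is therefore surjective. Composing with $(\hat{\bar\psi})^{-1}$ yields a surjective $\C$-algebra endomorphism of the noetherian ring $\hat R$; any such endomorphism is automatically injective, and hence is an isomorphism. Consequently $\hat\psi_n$ itself is an isomorphism of complete analytic algebras.

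To descend from $\hat\psi_n$ to $\psi_n$, I would appeal to Proposition~\ref{smooth sernesi}: the isomorphism $\hat\psi_n$ is in particular formally smooth, so $\psi_n$ is formally smooth as well, and $\hat S$ is a formal power series ring over $\hat R$. Because $\hat\psi_n$ is bijective, this power series ring must have zero variables, which combined with the Henselian character of analytic algebras and the triviality of the residue field extension forces $\psi_n$ itself to be an isomorphism. The main obstacle is precisely this last step: concluding that $\psi_n$ is an isomorphism from the fact that its completion is, since this is where the rigidity of analytic algebras has to be invoked to rule out distinct analytic algebras sharing isomorphic completions.
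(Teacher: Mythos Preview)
The paper does not supply its own proof of this corollary: both Theorem~\ref{Artin's theorem} and Corollary~\ref{Artin's cor} are simply quoted from Artin's paper~\cite{A2} (Theorem~1.5a and Corollary~1.6 there). So there is no argument in the paper to compare yours against.

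Your sketch is the standard one and is correct through the point where you establish that $\hat\psi_n:\hat R\to\hat S$ is an isomorphism: agreement modulo $\mathfrak{m}_S^n$ with $n\ge 2$ forces the cotangent maps to coincide, Nakayama in the complete setting gives surjectivity of $\hat\psi_n$, and the surjective endomorphism of the noetherian ring $\hat R$ obtained by composing with $\hat{\bar\psi}^{-1}$ is automatically bijective.

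The only soft spot is the descent you flag at the end. Invoking Proposition~\ref{smooth sernesi} tells you $\psi_n$ is formally smooth, but that alone does not yield bijectivity of $\psi_n$ itself; a local map of noetherian local rings can induce an isomorphism on completions without being an isomorphism (e.g.\ $\C[x]_{(x)}\hookrightarrow\C\{x\}$). What makes the analytic case work is exactly the ``rigidity'' you allude to, and it can be made precise in either of two ways. One option: since $\psi_n$ induces an isomorphism on cotangent spaces, lift it to a map $\tilde\psi_n:\C\{x_1,\dots,x_m\}\to\C\{y_1,\dots,y_m\}$ whose Jacobian at $0$ is invertible; the analytic inverse function theorem makes $\tilde\psi_n$ an isomorphism, and faithful flatness of $\C\{y\}\to\C[[y]]$ then forces $\tilde\psi_n(I)=J$. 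Alternatively: from $\mathfrak{m}_R S=\mathfrak{m}_S$ one gets $\dim_\C S/\mathfrak{m}_R S=1$, and the Weierstrass finiteness theorem then makes $S$ a finite $R$-module, so Nakayama gives surjectivity of $\psi_n$; injectivity follows from $R\hookrightarrow\hat R$ and injectivity of $\hat\psi_n$. Either route completes your argument.
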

\noindent Using these results, we can prove the following 

\begin{prop} \label{prop alla artin}
Let $R$ and $S$ be analytic algebras and let $\hat R$ and $\hat S$ be their completions. Let $\hat \psi:\hat R\rightarrow \hat S$ be a smooth morphism, then there exists a smooth morphism $R\rightarrow S$.  
\end{prop}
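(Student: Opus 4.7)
The plan is to exploit Proposition \ref{smooth sernesi}, which tells us that smoothness of $\hat\psi$ is equivalent to $\hat S$ being a formal power series ring over $\hat R$, and then to descend this power series structure to an isomorphism of analytic algebras by means of Corollary \ref{Artin's cor}. The desired smooth morphism $R\to S$ is then obtained by composing the analytic inclusion of $R$ into an analytic power series ring with this isomorphism.

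More precisely, by Proposition \ref{smooth sernesi} there exist a non-negative integer $k$ and an isomorphism of $\hat R$-algebras $\hat S \cong \hat R[[y_1,\ldots,y_k]]$. I would set $T := R\{y_1,\ldots,y_k\}$, an analytic algebra whose completion is $\hat T = \hat R[[y_1,\ldots,y_k]]$. The natural inclusion $\iota:R\hookrightarrow T$ is formally smooth, since its completion $\hat\iota:\hat R\to \hat T$ realizes $\hat T$ as a formal power series ring over $\hat R$, so the equivalence of Proposition \ref{smooth sernesi} applies. Composing the completion map $T\to \hat T$ with the fixed isomorphism $\hat T\cong \hat S$ produces a morphism of analytic algebras $\bar\psi:T\to \hat S$ whose induced map on completions $\hat{\bar\psi}:\hat T\to \hat S$ is, tautologically, an isomorphism.

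Corollary \ref{Artin's cor} (applied, say, with $n=2$) then yields an isomorphism of analytic algebras $\psi_2:T\to S$. The composite $\psi_2\circ\iota:R\to S$ is the desired morphism: it is formally smooth because it is the composition of the formally smooth map $\iota$ with an isomorphism, and one sees this directly on completions via Proposition \ref{smooth sernesi}. I do not anticipate a genuine obstacle here; all the content lies in choosing the auxiliary analytic algebra $T$, after which Artin approximation in the stated corollary form and the power-series characterization of smoothness do the rest.
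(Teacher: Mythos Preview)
Your proof is correct and follows essentially the same approach as the paper: use Proposition~\ref{smooth sernesi} to get $\hat S\cong \hat R[[y_1,\ldots,y_k]]$, descend this to an isomorphism $R\{y_1,\ldots,y_k\}\cong S$ via Corollary~\ref{Artin's cor}, and compose with the smooth inclusion $R\hookrightarrow R\{y_1,\ldots,y_k\}$. Your write-up is slightly more explicit about constructing the map $\bar\psi:T\to\hat S$ needed to invoke the corollary, but the argument is the same.
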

\begin{proof}
By Thereom \ref{smooth sernesi}, there exists an isomorphism $\hat \phi: \hat R[[x]] \rightarrow \hat S$, Corollary \ref{Artin's cor} implies that there exists an isomorphism $\phi: R\{x\}\rightarrow S$, which is obviously smooth by Theorem \ref{smooth sernesi}. Thus the morphism $\phi \circ i: R\hookrightarrow R\{x\} \to S$ is smooth. 
\end{proof}

\noindent To complete our study of analytic algebras, we prove the following

\begin{prop}
Let $R$ and $S$ be analytic algebras, such that 
\begin{itemize}
\item[-] $\dim_{\C} \mathfrak{m}_R/ \mathfrak{m}^2_R =\dim_{\C} \mathfrak{m}_S/ \mathfrak{m}^2_S$ and
\item[-] $R\{z_1,\ldots, z_N\} \cong S\{z_1, \ldots ,z_M\}$, for some $N$ and $M$,
\end{itemize}
then $R$ and $S$ are isomorphic.
\end{prop}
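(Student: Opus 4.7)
The plan is to reduce the statement to the analogous cancellation for complete local rings, solve it there, and then transport the result back to analytic algebras via Artin's corollary. First, taking completions of the isomorphism $R\{z_1,\ldots,z_N\}\cong S\{z_1,\ldots,z_M\}$ yields $\hat R[[z_1,\ldots,z_N]]\cong\hat S[[z_1,\ldots,z_M]]$. Since completion preserves $\mathfrak m/\mathfrak m^2$, comparing the embedding dimensions of both sides together with the hypothesis $\dim_{\C}\mathfrak m_R/\mathfrak m_R^2=\dim_{\C}\mathfrak m_S/\mathfrak m_S^2$ forces $N=M$; call this common integer $n$. Once I have shown $\hat R\cong\hat S$ as complete local $\C$-algebras, the composition $R\hookrightarrow\hat R\xrightarrow{\sim}\hat S$ is a morphism $\bar\psi\colon R\to\hat S$ whose induced map on completions is an isomorphism, and Corollary~\ref{Artin's cor} applied with any $k\geq 2$ then yields the desired isomorphism $R\cong S$.

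For the cancellation $\hat R\cong\hat S$, I would invoke Cohen's structure theorem to write minimal presentations $\hat R=\C[[x_1,\ldots,x_r]]/I$ and $\hat S=\C[[y_1,\ldots,y_r]]/J$ with $I,J\subseteq\mathfrak m^2$, and lift the given isomorphism $\Phi\colon\hat R[[z]]\to\hat S[[w]]$ to an isomorphism of the ambient regular local rings $\tilde\Phi\colon\C[[x,z]]\xrightarrow{\sim}\C[[y,w]]$ sending $I\cdot\C[[x,z]]$ onto $J\cdot\C[[y,w]]$. The strategy is then to modify $\tilde\Phi$, by composing on either side with automorphisms preserving $I$ and $J$ respectively, so as to straighten it into a form satisfying $\tilde\Phi(x_i)\in\C[[y]]$ for every $i$. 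Restricting such a straightened $\tilde\Phi$ to $\C[[x]]$ would yield an isomorphism $\C[[x]]\to\C[[y]]$ sending $I$ onto $J$, hence the desired $\hat R\cong\hat S$.

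The hard part will be the straightening step. The embedding dimension hypothesis enters crucially: it forces the linear parts of $\tilde\Phi(z_j)$ to span an $n$-dimensional subspace of $\mathfrak m/\mathfrak m^2$ of $\C[[y,w]]$ complementary to the image of the $\hat R$-cotangent space. I would first exploit this at the linear level to post-compose $\tilde\Phi$ with a linear automorphism of $\C[[y,w]]$ (chosen compatible with $J$) moving the image of $\C z_1\oplus\cdots\oplus\C z_n$ exactly onto $\C w_1\oplus\cdots\oplus\C w_n$, and then use an iterative formal implicit function / Weierstrass-type argument to inductively eliminate the higher-order $w$-dependence from each $\tilde\Phi(x_i)$. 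A cleaner alternative would be to first establish the uniqueness up to isomorphism of the decomposition of any analytic algebra $T$ as $T\cong T_0\{u_1,\ldots,u_k\}$ with $T_0$ admitting no smooth direct factor, from which the proposition follows immediately upon applying it to $R\{z\}\cong S\{w\}$ and comparing embedding dimensions.
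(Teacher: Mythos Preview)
Your plan is coherent and would work if executed, but it takes a genuinely different route from the paper and leaves the central step unfinished. The paper argues entirely inside the analytic category: after observing $N=M$ and reducing by induction to $N=1$, it writes the isomorphism $\phi\colon \C\{\underline x,z\}/I\to\C\{\underline y,z\}/J$ and its inverse $\psi$, looks at the linear $z$-coefficients $\alpha,a$ of $\phi(z),\psi(z)$, and uses the Weierstrass Preparation Theorem directly when one of them is nonzero. The only subtle case is $\alpha=a=0$; there the paper precomposes with the automorphism $\nu\colon z\mapsto z+b(\underline x)$ (where $b$ is the $x$-part of $\psi(z)$), and the identity $\phi\circ\psi=\mathrm{id}$ forces $\phi\circ\nu(z)$ to acquire a unit $z$-coefficient, reducing to the first case. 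No passage to completions and no appeal to Artin's corollary is needed.

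Your approach---complete, prove cancellation for $\hat R[[z]]\cong\hat S[[z]]$, then descend by Corollary~\ref{Artin's cor}---is legitimate, and the reduction and descent steps are correct as stated. The gap is that the ``straightening'' of $\tilde\Phi$ is only described, not carried out; you yourself flag it as ``the hard part,'' and the alternative you offer (uniqueness of the maximal smooth factor) is equally unproved. In effect you have relocated the entire difficulty to the formal setting without resolving it. Note that the paper's Weierstrass trick works verbatim for formal power series, so even within your framework the cleanest way to finish is exactly the paper's argument; your detour through completions then buys nothing beyond the comfort of working formally.
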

\begin{proof}
The first hypothesis implies that, in the isomorphism $R\{z_1,\ldots ,z_N\} \cong S\{z_1,\ldots ,z_M\}$, $N=M$. Moreover, proving the proposition by induction on $N$, the first hypothesis makes the inductive step trivial. Thus it is sufficient to prove the proposition for $N=1$.

Let $R= \C\{x_1,\ldots ,x_n\}/I $ and $S=\C \{y_1,\ldots ,y_m\}/J$ be analytic algebras, with $I \subset (x_1, \ldots ,x_n)^2$ and $J\subset (y_1,\ldots ,y_m)^2$.
Let $\phi:  \C\{ \underline{x} \}\{z\}/I \to  \C\{\underline{y} \}\{z\}/J$ be an isomorphism and let $\psi$ its inverse. Let $\phi(z)=\alpha z + \beta (\underline{y}) + \gamma(\underline{y}, z)$ and let $\psi(z)=az+ b(\underline{x})+c(\underline{x},z)$, where $\alpha, a \in \C$ are costants, $\beta$, $\gamma$, $b$ and $c$ are polynomial, $\gamma$ and $c$ do not contain degree one terms and, with a linear change of variables, we can suppose that $\phi$ and $\psi$ do not contain constant term.\\
If at least one between $\alpha$ and $a$ is different from zero, then the thesis follows easly. For example, if $\alpha \neq 0$, the image $\phi(z)$ satisfies the hypothesis of Weierstrass Preparation Theorem and so it can be written as $\phi(z)= (z+h(\underline{y}))\cdot u(\underline{y},z)$, where $u$ is a unit and $h(\underline{y})$ is a polynomial. Then $\phi$ is well defined and induces an isomorphism on quotients: $\phi: \C\{\underline{x}\}/I \to \C\{\underline{y}\}\{z\}/J\cdot (z+h(\underline{y}))\cong\C\{\underline{y}\}/J $.\\
Let's now analyse the case $\alpha=a=0$. Let $\nu: \C\{\underline{x}\}\{z\}/I \to \C\{\underline{x}\}\{z\}/I$ be a homomorphism defined by $\nu(x_i)=x_i$, for all $i$, and $\nu(z)= z+ b(\underline{x})$. It is obviously an isomorphims and the composition $\phi\circ\nu$ is an isomorphims from $\C\{\underline{x}\}\{z\}/I$ to $\C\{\underline{y}\}\{z\}/J$, such that $\phi \circ \nu(z)$ contains a linear term in $z$, thus, passing to the quotient, it induces an isomorphism $\C\{\underline{x}\}/I \cong \C\{\underline{y}\}/J$. 
\end{proof}

Now we consider the following relation between analytic algebras:
$$ R \propto S \mbox{ \ \ \ iff \ \ \ } \exists \ \ R\longrightarrow S \mbox{ \ \ \ formally smooth morphism,}$$
let $\sim$ be the equivalence relation between analytic algebras generated by $\propto$. 
We define an other equivalence relation:
$$ R \approx S \mbox{ \ \ \ iff \ \ \ }  R\{x_1 \ldots x_n \} \cong S\{y_1\ldots y_m \} \mbox{ \ \ \ are isomorphic, for some $n$ and $m$.}$$
The relation $\approx$ is the same as the relation $\sim$. 
Infact, if $R\sim S$, there exists a chain of formally smooth morphisms $R\rightarrow T_1 \leftarrow T_2 \rightarrow \ldots \rightarrow T_n \leftarrow S$, that, by Theorem \ref{smooth sernesi} and Corollary \ref{Artin's cor}, gives an isomorphism $R\{\underline{x}\}\cong S\{\underline{y}\}$, then $R\approx S$. Viceversa, if $R \approx S$, there exists an isomorphism $R\{\underline{x}\} \cong S\{\underline{y}\}$ which is formally smooth, thus we have the chain of formally smooth morphisms $R\rightarrow R\{\underline{x}\} \rightarrow S\{\underline{x}\} \leftarrow S$ and $R\sim S$. 

We consider the following relation between complete analytic algebras:
$$ \hat R \propto \hat S \mbox{ \ \ \ iff \ \ \ } \exists \ \ \hat R \rightarrow  \hat S \mbox{ \ \ \ formally smooth morphism,}$$
let $\sim$ be the equivalence relation between analytic algebras generated by $\propto$.
We define an other equivalence relation:
$$ \hat R \equiv \hat S \mbox{ \ \ \ iff \ \ \ }  \hat R[[x_1 \ldots x_n ]] \cong \hat S[[x_1\ldots x_m]] \mbox{ \ \ \ are isomorphic, for some $n$ and $m$.}$$
As before, the relation $\equiv$ is the same as the relation $\sim$. 
Furthermore, the equivalence relation $\sim$ on completions of analytic algebras coincides with the relation $\sim$ between the analytic algebras themselves, because obviously the two relations $\equiv$ and $\approx$ are the same.

\ \\
The opposite category of the category of analytic algebras $An^o$ is called the category of \emph{germs of analytic spaces}. The geometrical meaning of this definition is that a germ $A^o$ can be represented by $(X,x,\alpha)$, where $X$ is a complex space with a distinguished point $x$ and $\alpha$ is a fixed isomorphism of $\C$-algebras $\Oh_{X,x}\cong A$. Two triples, $(X,x,\alpha)$ and $(Y,y,\beta)$, are equivalent if there exists an isomorphism from a neighborhood of $x$ in $X$ to a neighborhood of $y$ in $Y$ which sends $x$ in $y$ and which induces an isomorphism $\Oh_{X,x}\cong \Oh_{Y,y}$.

Let $(X,x)$ and $(Y,y)$ be germs of analytic spaces, given by the analytic algebras $S$ and $R$ respectively, let $\Psi:(X,x)\rightarrow (Y,y)$ be a morphism of germs of analytic spaces and let $\psi:R\rightarrow S$ be the corresponding morphism of analytic algebras. 
\begin{definition}
The morphism $\Psi:(X,x)\rightarrow (Y,y)$ is called \emph{smooth} if the morphism $\psi:R\rightarrow S$ is formally smooth.
\end{definition}

We consider the following relation between germs of analytic spaces:\\
$$(X,x) \propto (Y,y) \mbox{ \ \ \ iff \ \ \ } \exists \ \ (X,x)\longrightarrow (Y,y) \mbox{ \ \ \ smooth morphism}$$
and we define $\sim$ to be the equivalence relation between germs of analytic spaces generated by the relation $\propto$. 
It is obvious that the relation $\sim$ defined between germs of analytic spaces is the same as the relation $\sim$ defined between their corresponding analytic algebras.
As in \cite{Vakil}, we give the following 
\begin{definition} \label{type sing sp ana}
The analytic spaces $(X,x)$ and $(Y,y)$ are said to have the same \emph{type of singularities} if they are equivalent under the relation $\sim$.  
\end{definition}

\noindent Our aim is to prove that the property that two germs of analytic spaces have the same type of singularities is formal, that is that it can be controlled at the level of functors.

\ \\
In all this paper we consider covariant functors
$\F:Art_{\C}\rightarrow Set$ from the category of local artinian $\C$-algebras with residue field $\C$ to the category of sets,
such that $\F(\C)=$ one point set. The functors of this type are called \emph{functors of Artin rings}. In the following we recall some basic notions about these functors. 

Let $\F$ be a functor of Artin rings. The \emph{tangent space} to $\F$ is the set $\F(\C[\epsilon])$, where $\C[\epsilon]$ is the local artinian $\C$-algebra of dual numbers, i.e. $\C[\epsilon]=\C[x]/(x^2)$ . It can be proved that $\F(\C[\epsilon])$ has a structure of $\C$-vector space (see \cite{Schlessinger}, Lemma 2.10).

An \emph{ostruction theory} $(V,v_e)$ for $\F$ is the data of a $\C$-vector space $V$, called \emph{obstruction space}, and, for every exact sequence in the category $Art_{\C}$:
$$e: 0\longrightarrow I  \longrightarrow  B \longrightarrow A \longrightarrow 0, $$ 
such that $I$ is annihilated by the maximal ideal of $B$,
a map $v_e: \F(A)\rightarrow V\otimes_{\C} I$ called \emph{obstruction map}. 
The data $(V,v_e)$ have to satisfy the following conditions:
\begin{itemize}
\item[-] if $\xi\in \F(A)$ can be lifted to $\F(B)$, then $v_e(\xi) = 0$,
\item[-] (base change) for every morphism $f:e_1\rightarrow e_2$ of small extensions, i.e. for every commutative
diagram
$$ 
\xymatrix{ e_1:  & 0 \ar[r] & I_1 \ar[d]^-{f_I} \ar[r] & B_1 \ar[d]^-{f_B}\ar[r] & A_1 \ar[d]^-{f_A}\ar[r] & 0 \\
           e_2:  & 0 \ar[r] & I_2  \ar[r] & B_2 \ar[r] & A_2 \ar[r] & 0  }   $$
then $v_{e_2}(f_A(\xi)) = (\Id_V \otimes f_I)(v_{e_1}(\xi))$, for every $\xi \in \F(A_1)$.
\end{itemize} 
An obstruction theory $(V,v_e)$ for $\F$ is called \emph{complete} if the converse of the first
item above holds, i.e. the lifting of $xi\in \F(A)$ to $\F(B)$ exists if and only if the obstruction $v_e(\xi)$ vanishes.

For morphisms of functors we have the following notion of smoothness:
\begin{definition}
Let $\nu:\F\rightarrow \G$ be a morphism of functors, it is said to be \emph{smooth} if, for every surjective homomorphism $B\rightarrow A$ in the category $Art_{\C}$, the induced map $\F(B)\rightarrow \G(B)\times_{\G(A)} \F(A)$ is surjective.
\end{definition}

Let $\F$ be a functor of Artin rings, a \emph{couple} for $\F$ is a pair $(A,\xi)$, where $A\in Art_{\C}$ and $\xi\in \F(A)$. A couple $(A,\xi)$ for $\F$ induces an obvious morphism of functors, $\Hom(A,-)\rightarrow \F$, which associates, to every $B\in Art_{\C}$ and $\phi\in \Hom(A,B)$, the element $\phi(\xi)\in \F(B)$.\\
We can extend the functor $\F$ to the category $\widehat{Art}_{\C}$ of local artinian complete $\C$-algebras with residue field $\C$ by the formula $\widehat\F(A)=\displaystyle \lim_{\leftarrow}\F(A/\mathfrak{m}^n)$. 
A \emph{procouple} for $\F$ is a pair $(A,\xi)$, where $A\in \widehat{Art}_{\C}$ and $\xi\in \widehat{\F}(A)$. It induces an obvious morphism of functors:  $\Hom(A,-)\rightarrow \F$.

\begin{definition}
A procouple $(A,\xi)$ for a functor $\F$ is called a \emph{prorepresentable hull} of $\F$, or just a \emph{hull} of $\F$, if the induced morphism $\Hom(A,-)\rightarrow \F$ is smooth and the induced map between tangent spaces $\Hom(A,\C[\epsilon])\rightarrow \F(\C[\epsilon])$ is bijective. \\
A functor $\F$ is called \emph{prorepresentable} by the procouple $(A,\xi)$ if the induced morphism $\Hom(A,-)\rightarrow \F$ is an isomorphism of functors.
\end{definition}

The existence, for a functor of Artin rings $\F$, of a procouple which is a hull of it or which prorepresents it is regulated by the well known 
\emph{Schlessinger conditions} (see \cite{Schlessinger}, Theorem 2.11). Moreover a functor which satisfies the two of the Schlessinger's conditions (H1) and (H2) is said to have a \emph{good deformation theory}. We do not precise this concept, because all functors that apper in the following are functors with a good deformation theory and all the ones involved in the proof of the Main Theorem have hull.   

With above notions, we can state the following Standard Smoothness Criterion (see \cite{Man Pisa}, Proposition 2.17):

\begin{theorem} \label{criterio liscezza}
Let $\nu:\F\rightarrow \G$ be a morphism of functors with a good deformation theory.  
Let $(V,v_e)$ and $(W,w_e)$ be two obstruction theories for $\F$ and $\G$ respectively. If:
\begin{itemize}
	\item[-] $(V,v_e)$ is a complete obstruction theory,
	\item[-] $\nu$ is injective between obstructions,
	\item[-] $\nu$ is surjective between tangent spaces, 
\end{itemize}
then $\nu$ is smooth.
\end{theorem}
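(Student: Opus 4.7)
The plan is to verify smoothness directly from the definition. Given any surjection $B\twoheadrightarrow A$ in $Art_{\C}$, I want to show that $\F(B)\rightarrow \G(B)\times_{\G(A)}\F(A)$ is surjective. A standard factorization reduces this to the case of a small extension $e:0\rightarrow I\rightarrow B\rightarrow A\rightarrow 0$ with $I\cdot \mathfrak{m}_B=0$, since any surjection can be decomposed as a composition of such small extensions and the lifting property composes.

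So fix a small extension and pick $(\eta,\xi)\in\G(B)\times_{\G(A)}\F(A)$, that is $\nu_A(\xi)=\pi(\eta)$, where $\pi:\G(B)\rightarrow\G(A)$ is the natural map. The first step is to lift $\xi$ to $\F(B)$. By naturality, the morphism $\nu$ sends the obstruction $v_e(\xi)\in V\otimes I$ to $w_e(\nu_A(\xi))\in W\otimes I$. But $\nu_A(\xi)=\pi(\eta)$ visibly admits a lift, so $w_e(\nu_A(\xi))=0$. The injectivity of $\nu$ between obstructions then forces $v_e(\xi)=0$, and the completeness of $(V,v_e)$ yields a lift $\tilde{\xi}\in\F(B)$ of $\xi$.

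There is no reason yet for $\nu_B(\tilde{\xi})$ to equal $\eta$; however, both are elements of $\G(B)$ mapping to the same element $\nu_A(\xi)\in\G(A)$. Here I invoke the good deformation theory of $\G$: Schlessinger's conditions (H1) and (H2) imply that the nonempty fibers of $\G(B)\rightarrow\G(A)$ over a small extension are acted upon transitively by the tangent space $\G(\C[\epsilon])\otimes I$, and analogously for $\F$; moreover this action is natural in the morphism $\nu$. Hence there exists $w\in\G(\C[\epsilon])\otimes I$ with $w\cdot\nu_B(\tilde{\xi})=\eta$. By the surjectivity of $\nu$ on tangent spaces, I can lift $w$ to some $v\in\F(\C[\epsilon])\otimes I$. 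Then $v\cdot\tilde{\xi}\in\F(B)$ is still a lift of $\xi$, and naturality gives $\nu_B(v\cdot\tilde{\xi})=w\cdot\nu_B(\tilde{\xi})=\eta$, producing the required element of $\F(B)$.

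The main obstacle I anticipate is the bookkeeping around the transitive action of the tangent space on fibers over small extensions; this is precisely what the good deformation theory hypothesis furnishes, and once it is invoked the argument is just a clean two-step lift: first obstruction vanishing, then correction by the tangent space.
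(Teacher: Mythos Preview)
Your argument is correct and is exactly the standard proof of the Standard Smoothness Criterion. Note, however, that the paper does not give its own proof of this statement: it simply states the theorem and cites \cite{Man Pisa}, Proposition~2.17. The argument you wrote---reduce to a small extension, kill the obstruction in $\F$ by pushing it to $\G$ where it vanishes and using injectivity plus completeness, then correct the lift using the transitive $t_\F\otimes I$-action on fibres coming from (H1)--(H2) together with surjectivity on tangent spaces---is precisely the proof one finds in that reference, so there is nothing to compare.
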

\noindent For morphisms between $\Hom$ functors the following proposition holds (see \cite{Schlessinger}, Proposition 2.5):
\begin{prop} \label{smooth schlessinger}
Let $\hat \psi:\hat R\rightarrow \hat S$ be a local homomorphism of local noetherian complete $\C$-algebras, let $\hat \phi: \Hom(\hat S,-)\rightarrow \Hom(\hat R,-)$ be the morphism of functors induced by $\hat\psi$. Then $\hat \phi$ is smooth if and only if $\hat S$ is isomorphic to a formal power series ring over $\hat R$.
\end{prop}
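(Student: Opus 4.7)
The plan is to show that the smoothness of the morphism of functors $\hat\phi$ is equivalent to the formal smoothness of the underlying ring map $\hat\psi:\hat R\to\hat S$, and then apply Proposition \ref{smooth sernesi} to translate formal smoothness into the statement that $\hat S\cong\hat R[[x_1,\ldots,x_n]]$.

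First, I would unwind the definitions. By definition, $\hat\phi$ is smooth if and only if, for every surjection $B\twoheadrightarrow A$ in $Art_{\C}$, the map
\[
\Hom(\hat S,B)\longrightarrow \Hom(\hat R,B)\times_{\Hom(\hat R,A)}\Hom(\hat S,A)
\]
is surjective. Since every surjection in $Art_{\C}$ factors as a finite composition of small extensions, it is enough to test this on small extensions $0\to I\to B\to A\to 0$ with $\mathfrak m_B\cdot I=0$. The condition then reads: for every pair $(f:\hat R\to B,\ g:\hat S\to A)$ such that $f$ reduces mod $I$ to $g\circ\hat\psi$, there exists a lift $\tilde g:\hat S\to B$ of $g$ with $\tilde g\circ\hat\psi=f$. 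This is exactly the condition that $\hat\psi$ be formally smooth (viewing $\hat S$ as an $\hat R$-algebra via $\hat\psi$).

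For the one direction, assuming $\hat\phi$ smooth, the argument above shows $\hat\psi$ is formally smooth; hence Proposition \ref{smooth sernesi} gives the isomorphism $\hat S\cong\hat R[[x_1,\ldots,x_n]]$. Conversely, if $\hat S\cong\hat R[[x_1,\ldots,x_n]]$, any compatible pair $(f,g)$ extends: define $\tilde g$ on $\hat R$ by $f$, and send each $x_i$ to any element of $B$ whose image in $A$ equals $g(x_i)$; this is possible because $B\to A$ is surjective, and the $x_i$ land in the maximal ideal, so the resulting assignment extends uniquely and continuously from polynomials to the formal power series ring (here we use that the image of $\mathfrak m_{\hat S}$ in the Artinian ring $B$ is nilpotent, which makes any polynomial expression into a finite sum).

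The main subtlety, and the only point requiring care, is the passage between smoothness as a property of a morphism of functors defined on $Art_{\C}$ and the formal smoothness of the ring homomorphism $\hat\psi$ between complete noetherian $\C$-algebras which are not themselves Artinian. This is handled by the standard observation that for $A\in Art_{\C}$ any continuous homomorphism $\hat S\to A$ factors through the Artinian quotient $\hat S/\mathfrak m_{\hat S}^n$ for $n$ large enough, so the functor $A\mapsto\Hom(\hat S,A)$ on $Art_{\C}$ retains all the information needed to test the lifting property against small extensions, and the equivalence with formal smoothness of $\hat\psi$ becomes a tautology.
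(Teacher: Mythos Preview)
Your argument is correct. The paper does not actually give its own proof of this proposition; it simply cites Schlessinger's Proposition~2.5. Your write-up---unwinding the definition of functor smoothness, reducing to small extensions, identifying the resulting lifting condition with formal smoothness of $\hat\psi$, and then invoking Proposition~\ref{smooth sernesi} for one direction and an explicit lifting of the power series variables for the other---is precisely the standard argument found in Schlessinger. The only point worth flagging is that the paper's definition of formal smoothness tests against all local artinian $\hat R$-algebras, while $\hat\phi$ is tested only on $Art_{\C}$; but since $\hat R$ and $\hat S$ are local $\C$-algebras with residue field $\C$, any local artinian $\hat R$-algebra appearing in the lifting problem has residue field $\C$ and hence lies in $Art_{\C}$, so the two notions coincide here and your invocation of Proposition~\ref{smooth sernesi} is justified.
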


\ \\ 
We consider the following relation between functors:
$$\F \propto \G \mbox{ \ \ \ iff \ \ \ } \exists \ \ \F\longrightarrow \G \mbox{ \ \ \ smooth morphism}$$
and we define $\sim$ to be the equivalence relation generated by $\propto$. 
\begin{definition} \label{rel funtori}
The functors $\F$ and $\G$ are said to have the same \emph{type of singularity} if they are equivalent under the relation $\sim$.
\end{definition}
Now we want to link Definitions \ref{type sing sp ana} and \ref{rel funtori} in the case the functors considered have hulls. 
Let $\F$ and $\G$ be two functors with hulls, given by the germs of analytic spaces $(X,x)$ and $(Y,y)$, defined by the analytic algebras $S$ and $R$ respectively.
If $(X,x)\sim (Y,y)$, or equivalently, if $R\sim S$, there exists a chain of smooth morphisms of analytic algebras $S\leftarrow T_1\rightarrow T_2\leftarrow \ldots\rightarrow R$ that induces a chain of smooth morphisms of functors $\F\leftarrow \Hom(\hat S,-)\rightarrow \Hom(\hat T_1,-)\leftarrow \Hom(\hat T_2,-)\ldots \Hom(\hat R,-) \rightarrow \G$, by Propositions \ref{smooth sernesi} and \ref{smooth schlessinger} and by Definition of hull, thus $\F\sim \G$. 
For the other implication we need the following 
\begin{prop} \label{prop formalita}
Let $\F$ and $\G$ be two functors with hulls given by the germs of analytic spaces $(X,x)$ and $(Y,y)$ respectively and let $\phi:\F\rightarrow \G$ be a smooth morphism. Then there exists a smooth morphism between the two germs $(X,x)$ and $(Y,y)$.    
\end{prop}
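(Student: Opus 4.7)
The plan is to first lift the smooth morphism $\phi:\F\to \G$ to a formally smooth morphism of complete analytic algebras $\hat\psi:\hat R\to \hat S$, and then apply Proposition~\ref{prop alla artin} to obtain a smooth morphism of analytic algebras $R\to S$, which by definition is a smooth morphism of germs $(X,x)\to(Y,y)$. Throughout, write $\pi_S:\Hom(\hat S,-)\to\F$ and $\pi_R:\Hom(\hat R,-)\to\G$ for the two hull morphisms, determined by universal elements $\hat\xi\in\hat\F(\hat S)$ and $\hat\eta\in\hat\G(\hat R)$.

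To build $\hat\psi$, the requirement is that $\hat\G(\hat\psi)(\hat\eta)=\phi(\hat\xi)\in\hat\G(\hat S)$. Writing $\hat S=\varprojlim_n \hat S/\mathfrak{m}_S^n$, construct $\hat\psi$ as the limit of a compatible system $\psi_n:\hat R\to \hat S/\mathfrak{m}_S^n$ by induction on $n$, starting from the canonical projection $\hat R\to\C$. Given $\psi_{n-1}$, smoothness of $\pi_R$ applied to the surjection $\hat S/\mathfrak{m}_S^n\twoheadrightarrow \hat S/\mathfrak{m}_S^{n-1}$ and to the compatible pair $(\psi_{n-1},\phi(\hat\xi)_n)$ produces the desired lift $\psi_n$.

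The main obstacle is to verify that $\hat\psi$ is formally smooth. Fix a small extension $0\to I\to B\to A\to 0$ in $Art_{\C}$ and a compatible pair $f:\hat S\to A$, $b:\hat R\to B$ with $b|_A=f\circ\hat\psi$; we seek $\tilde f:\hat S\to B$ with $\tilde f|_A=f$ and $\tilde f\circ\hat\psi=b$. Smoothness of $\phi$ yields $\xi_B\in\F(B)$ with $\xi_B|_A=\pi_S(f)$ and $\phi(\xi_B)=\pi_R(b)$, and then smoothness of $\pi_S$ yields $\tilde f:\hat S\to B$ with $\tilde f|_A=f$ and $\pi_S(\tilde f)=\xi_B$. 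Both $\tilde f\circ\hat\psi$ and $b$ lie in $\Hom(\hat R,B)$, restrict to $f\circ\hat\psi$ on $A$, and map to $\pi_R(b)\in\G(B)$, so they belong to the same fiber of $\Hom(\hat R,B)\to\Hom(\hat R,A)\times_{\G(A)}\G(B)$. The crucial observation is that this fiber is a torsor under $\ker(T\pi_R)\otimes_{\C} I$, a standard fact for morphisms of functors of Artin rings with a good deformation theory; the hull property of $\pi_R$ forces $T\pi_R$ to be bijective, hence this kernel vanishes and $\tilde f\circ\hat\psi=b$. With formal smoothness of $\hat\psi$ established, Proposition~\ref{prop alla artin} produces the desired smooth morphism $R\to S$, equivalently a smooth morphism of germs $(X,x)\to(Y,y)$.
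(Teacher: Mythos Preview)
Your overall strategy---construct $\hat\psi:\hat R\to\hat S$ making the square with the hull maps commute, verify it is formally smooth, then invoke Proposition~\ref{prop alla artin}---matches the paper's. The difference lies in how smoothness of the induced $\hat\phi:\Hom(\hat S,-)\to\Hom(\hat R,-)$ is established: the paper does not argue by a direct lifting, but instead applies the Standard Smoothness Criterion (Theorem~\ref{criterio liscezza}), reading off from the commutative square that $\hat\phi$ is surjective on tangent spaces and injective between obstructions, and then translating this via Propositions~\ref{smooth sernesi} and~\ref{smooth schlessinger} into formal smoothness of $\hat\psi$.

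Your direct lifting argument has a genuine gap at the torsor step. The assertion that the fibre of $\Hom(\hat R,B)\to\Hom(\hat R,A)\times_{\G(A)}\G(B)$ is a torsor under $\ker(T\pi_R)\otimes I$ requires the action of $T_\G\otimes I$ on the fibres of $\G(B)\to\G(A)$ to be \emph{free}; that is Schlessinger's condition (H4), and it is not a consequence of ``good deformation theory'' (H1)+(H2), which is all the paper assumes. Without (H4), two elements in that fibre differ by some $v\in T_{\hat R}\otimes I$ with $T\pi_R(v)$ lying in the stabiliser of $\pi_R(b)$, which need not vanish, so you cannot conclude $\tilde f\circ\hat\psi=b$. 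The repair is short and does not need the torsor claim: the commutative square, the bijectivity of $T\pi_S$ and $T\pi_R$, and the smoothness of $\phi$ together force the tangent map $T_{\hat S}\to T_{\hat R}$, $g\mapsto g\circ\hat\psi$, to be surjective. Since $b$ and $\tilde f\circ\hat\psi$ are both lifts of $f\circ\hat\psi$ in the prorepresentable functor $\Hom(\hat R,-)$, they differ by a well-defined $w\in T_{\hat R}\otimes I$; lift $w$ to $u\in T_{\hat S}\otimes I$ and replace $\tilde f$ by the translate $u\cdot\tilde f$, which still restricts to $f$ over $A$ and now satisfies $(u\cdot\tilde f)\circ\hat\psi=b$.
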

\begin{proof}
Let $S$ and $R$ be the analytic algebras that define the germ $(X,x=0)$ and $(Y,y=0)$ respectively. 
By hypothesis, we have the following diagram: 
$$
\xymatrix{ \F \ar[r]^{\phi} & \G\\
           \Hom(\hat{S},-) \ar[u]^{\alpha} \ar@{-->}[r]^-{\hat\phi}& \Hom(\hat{R},-) \ar[u]^{\beta}   } $$
where, by definition of hull, $\alpha$ and $\beta$ are smooth morphism and they are bijective on tangent spaces. 
Then, by smoothness, there exists a morphism $\hat\phi: \Hom(\hat{S},-)\rightarrow \Hom(\hat{R},-)$ that makes the diagram commutative. 
By hypothesis on $\alpha$, $\beta$ and $\phi$, it is surjective on tangent spaces and it is injective on obstruction spaces. Thus $\hat\phi$ is a smooth morphism, by the Standard Smoothness Criterion (Theorem \ref{criterio liscezza}).\\
The morphism $\hat\phi$ determines uniquely an homomorphism $\hat\psi:\hat{R}\rightarrow \hat{S}$, which is formally smooth, by Propositions \ref{smooth sernesi} and \ref{smooth schlessinger}. 
Now, by Proposition \ref{prop alla artin}, there exists a formally smooth morphism $\psi:R\rightarrow S$ and so a smooth morphism between the germs $(X,x)$ and $(Y,y)$.
\end{proof}
\noindent Now, if $\F\sim \G$, there exists a chain of smooth morphisms of functors $\F \leftarrow \mathcal H_1 \rightarrow \mathcal H_2 \leftarrow \ldots \rightarrow \G$. Then $\H_i$ necessary have hulls, we indicate with $T_i$ the complete analytic algebra that is an hull for $\H_i$. By Proposition \ref{prop formalita}, the chain of smooth morphisms of functors gives a chain of smooth morphisms of complete analytic algebras $\hat S\rightarrow T_1 \leftarrow T_2\rightarrow \ldots \leftarrow \hat R$, thus $\hat S\equiv \hat R$, so, as we have observed, $S\sim R$ and $(X,x) \sim (Y,y)$.

\ \\
Now we return to germs of analytic spaces and we concentrate our interests on quadratic algebraic singularities.
\begin{definition} \label{def quadratic alg sing for spaces}
Let $X$ be a complex affine scheme, it is said to have \emph{quadratic algebraic singularities} if it is defined by finitely many quadratic homogeneous polynomials.\\
Let $\mathcal X$ be an analytic space, it is said to have \emph{quadratic algebraic singularities} if it is locally isomorphic to complex affine schemes with quadratic algebraic singularities. 
\end{definition}
\noindent For germs of analytic spaces we want to prove the following 
\begin{theorem} \label{teo sing quadr}
Let $(X,0)$ and $(Y,0)$ be two germs of analytic spaces and let $\phi:(X,0)\longrightarrow (Y,0)$ be a smooth morphism. Then $(X,0)$ has quadratic algebraic singularities if and only if $(Y,0)$ has quadratic algebraic singularities.
\end{theorem}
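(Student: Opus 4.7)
The strategy is to translate the smoothness of $\phi$ into an isomorphism $\hat S\cong\hat R[[x_1,\ldots,x_k]]$ and then exploit the fact that quadratic algebraic singularities are detected by the associated graded ring.

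Let $R$ and $S$ be the analytic algebras of $(Y,0)$ and $(X,0)$. The smooth morphism $\phi$ corresponds to a formally smooth local homomorphism $\psi:R\to S$. As in the proof of Proposition \ref{prop alla artin}, Proposition \ref{smooth sernesi} and Corollary \ref{Artin's cor} let me assume $S\cong R\{x_1,\ldots,x_k\}$, so that $\hat S\cong\hat R[[x_1,\ldots,x_k]]$. The implication $(Y,0)$ quadratic $\Rightarrow(X,0)$ quadratic is then immediate: a presentation $R\cong\C\{y\}/J$ with $J$ generated by quadratic homogeneous polynomials yields $S\cong\C\{y,x\}/J$ with the same generators.

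For the converse, assume $\hat S\cong\C[[z_1,\ldots,z_m]]/J'$ with $J'$ generated by quadratic homogeneous polynomials. Because $J'$ is homogeneous, $\hat S\cong\widehat{\mathrm{gr}(\hat S)}$ with $\mathrm{gr}(\hat S)\cong\C[z]/J'$ a quadratic graded algebra. On the other hand $\mathrm{gr}(\hat S)=\mathrm{gr}(\hat R)[x_1,\ldots,x_k]$, so $\mathrm{gr}(\hat R)=\mathrm{gr}(\hat S)/(x_1,\ldots,x_k)$ is a quotient of a quadratic graded algebra by degree-one elements; after a linear change of variables killing the $x_i$, the residual relations remain of degree two, so $\mathrm{gr}(\hat R)\cong\C[y_1,\ldots,y_n]/I_0$ with $I_0$ generated in degree two. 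Setting $R_0:=\C\{y\}/I_0$, whose completion is $\widehat{\mathrm{gr}(\hat R)}$, I obtain
\[
\widehat{R\{x\}}\cong\hat S\cong\widehat{\mathrm{gr}(\hat S)}=\widehat{\mathrm{gr}(\hat R)}[[x]]\cong\widehat{R_0\{x\}}.
\]
Corollary \ref{Artin's cor} then lifts this to an analytic isomorphism $R\{x\}\cong R_0\{x\}$. Since $R$ and $R_0$ both have embedding dimension $n$ (for $R$ because $\dim\mathfrak m_R/\mathfrak m_R^2=\dim\mathrm{gr}(\hat R)_1=n$, for $R_0$ because $I_0\subset(y)^2$), the cancellation proposition stated just before this theorem yields $R\cong R_0\cong\C\{y\}/I_0$, so $(Y,0)$ has quadratic algebraic singularities.

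The main obstacle is precisely this converse direction: one has to show both that the quadraticity of $\mathrm{gr}(\hat S)$ descends to $\mathrm{gr}(\hat R)$, and that $\hat R$ itself is isomorphic to $\widehat{\mathrm{gr}(\hat R)}$. The conceptual point is that quadratic algebraic singularities are rigid enough to be determined by their associated graded, so once the latter is identified as quadratic the quadratic presentation of $R$ can be read off; the cancellation of the auxiliary variables $x_1,\ldots,x_k$ is then supplied by the previous proposition combined with Artin approximation.
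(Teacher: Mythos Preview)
Your argument is correct, and the forward direction coincides with the paper's. For the converse, however, you take a genuinely different route.

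The paper argues geometrically, via the hyperplane Lemma~\ref{lemma}: it chooses linear forms $h_1,\ldots,h_s$ in the ambient $\C^n$ of $X$ whose classes in $\mathfrak m_S/\mathfrak m_S^2$ span a complement to the image of $\mathfrak m_R$, sets $X'=X\cap\{h_1=\cdots=h_s=0\}$, and observes that $X'$ is again cut out by homogeneous quadrics. Iterating Lemma~\ref{lemma} shows $\phi|_{X'}$ is still smooth, and a tangent-space count forces it to be an isomorphism, so $(Y,0)\cong(X',0)$ is quadratic.

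Your approach is purely algebraic: you exploit that a quadratic presentation makes $\hat S\cong\widehat{\mathrm{gr}(\hat S)}$, combine this with $\mathrm{gr}(\hat R[[x]])\cong\mathrm{gr}(\hat R)[x]$ to obtain $\hat R[[x]]\cong\widehat{\mathrm{gr}(\hat R)}[[x]]$, and then invoke Artin approximation together with the cancellation proposition to strip off the extra variables. This avoids Lemma~\ref{lemma} entirely and instead gives the cancellation proposition (which the paper proves but does not use in its own proof of Theorem~\ref{teo sing quadr}) its natural role. Conceptually your proof makes transparent \emph{why} the result holds---quadratic singularities are exactly those detected by the associated graded---whereas the paper's hyperplane-slicing is more hands-on and stays closer to the geometric picture.
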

\noindent We need the following 
\begin{lemma} \label{lemma}
Let $(X,0)$ and $(Y,0)$ be two germs of analytic spaces and let $\phi:(X,0)\longrightarrow (Y,0)$ be a smooth morphism. Let $X\subset \C^N$ and let $H=\{x\in \C^N \mid h(x)=0 \}$ be an hypersurface of $\C^N$, such that:
\begin{itemize}
	\item[-] $dh(0)\neq 0$
	\item[-] $TH \not\supset  T\phi^{-1}(0)$,
\end{itemize}
then: $\phi|_{X\cap H}:(X\cap H,0)\longrightarrow (Y,0)$ is a smooth morphism.
\end{lemma}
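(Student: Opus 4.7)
The plan is to move from geometry to algebra and then perform a suitable formal change of coordinates. Let $R$ and $S$ be the analytic algebras of $(Y,0)$ and $(X,0)$, and let $\psi:R\to S$ be the morphism of analytic algebras corresponding to $\phi$. With $X\subset\C^N$, the equation of the hypersurface gives an element $h\in S$, and the germ $(X\cap H,0)$ is described by $S/(h)$; the restricted morphism $\phi|_{X\cap H}$ corresponds to the composition $R\xrightarrow{\psi}S\twoheadrightarrow S/(h)$, whose formal smoothness is, by Proposition \ref{smooth sernesi}, equivalent to formal smoothness of the induced map on completions $\hat R\to \hat S/(h)$.

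Next I would use the smoothness of $\phi$: by Proposition \ref{smooth sernesi} there is an $\hat R$-algebra isomorphism $\hat S\cong \hat R[[x_1,\ldots,x_k]]$ with $k=\dim T\phi^{-1}(0)$, and the classes of $x_1,\ldots,x_k$ form a regular system of parameters for the fiber $\hat S/\mathfrak{m}_{\hat R}\hat S\cong\C[[x_1,\ldots,x_k]]$. Expanding $h=\sum_\alpha c_\alpha x^\alpha$ with $c_\alpha\in\hat R$, the condition $h(0)=0$ forces $c_0\in\mathfrak{m}_{\hat R}$. The hypothesis $TH\not\supset T\phi^{-1}(0)$ asserts that $dh$ is not identically zero on the $k$-plane $T\phi^{-1}(0)$, which under the above identification is equivalent to the reduction $\bar h\in \C[[x_1,\ldots,x_k]]$ having nonzero linear part, i.e.\ some $c_{e_i}$ being a unit of $\hat R$. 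After relabelling, I may assume $c_{e_1}$ is a unit.

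To finish, I would set $y_1:=h/c_{e_1}$ and $y_i:=x_i$ for $i\geq 2$. The rule $x_1\mapsto y_1$, $x_i\mapsto x_i$ defines an $\hat R$-algebra endomorphism of $\hat R[[x_1,\ldots,x_k]]$; its reduction modulo $\mathfrak{m}_{\hat R}$ sends $x_1$ to $x_1+(\text{linear combination of }x_2,\ldots,x_k)+(\text{higher order})$ and fixes the other $x_i$, so on the relative cotangent space it acts by an invertible (unipotent) matrix. By the standard lifting argument for complete local rings this promotes to an $\hat R$-algebra automorphism. Since $c_{e_1}$ is a unit we have $(h)=(y_1)$ in the new coordinates, hence $\hat S/(h)\cong \hat R[[y_2,\ldots,y_k]]$, a formal power series ring over $\hat R$, and formal smoothness of $\hat R\to \hat S/(h)$ follows once more from Proposition \ref{smooth sernesi}.

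The hard part is the third step: one must match the geometric transversality with the algebraic condition that some $c_{e_i}$ be a unit, and then verify that $y_1,x_2,\ldots,x_k$ really is a new regular system of parameters of $\hat S$ relative to $\hat R$. Both points become elementary once everything is reduced modulo $\mathfrak{m}_{\hat R}$ and one works inside $\C[[x_1,\ldots,x_k]]$. Observe that once this is done, the first hypothesis $dh(0)\neq 0$ is actually subsumed by the transversality condition, and no Weierstrass-type preparation is needed.
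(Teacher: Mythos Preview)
Your argument is correct and follows essentially the same route as the paper's: both translate the transversality hypothesis $TH\not\supset T\phi^{-1}(0)$ into the nonvanishing of a partial derivative of $h$ in one of the relative variables, and then eliminate that variable to exhibit $\mathcal{O}_{X\cap H,0}$ as a power series ring over $\mathcal{O}_{Y,0}$ in one fewer variable. The only difference is packaging: the paper stays in the analytic category, writing $\mathcal{O}_{X,0}\cong\mathcal{O}_{Y,0}\{t_1,\ldots,t_s\}$ and invoking the analytic Implicit Function Theorem directly, whereas you pass to completions (legitimately, via Proposition~\ref{smooth sernesi}) and carry out the formal change of coordinates explicitly. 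Your closing remark that the hypothesis $dh(0)\neq 0$ is already implied by $TH\not\supset T\phi^{-1}(0)$ is correct and goes slightly beyond what the paper notes.
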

\begin{proof}
Let $(X,0)$ and $(Y,0)$ be defined by $\C\{x_1,\ldots,x_n\}/I$ and $\C\{y_1,\ldots,y_m\}/J$ respectively. Since $\phi$ is smooth, $\Oh_{X,0}$ is a power series ring over $\Oh_{Y,0}$, i.e. $\Oh_{X,0}\cong \Oh_{Y,0}\{t_1,\ldots,t_s\}$, for some $s$.\\
Let $X'=X\cap H$ be the intersection, then $\Oh_{X',0}\cong \Oh_{X,0}/(h)$. If $g$ corresponds to $h$ by the isomorphism $\Oh_{X,0}\cong \Oh_{Y,0}\{t_1,\ldots,t_s\}$, then $\Oh_{X',0}\cong \Oh_{Y,0}\{t_1,\ldots,t_s\}/(g)$.\\
The hypothesis $dh(0)\neq 0$ becomes $dg(0)\neq 0$, that implies that there exists an indeterminate between $y_i$ and $t_i$, such that the partial derivative of $g$ with respect to this indeterminate calculated in zero is not zero. Moreover, the hypothesis $TH\not\supset T\phi^{-1}(0)$ implies that this indeterminate must be one of the $t_i$, for example $t_{\overline{i}}$. \\
Thus, using the Implicit Function Theorem, we obtain $ \Oh_{X',0}\cong \Oh_{Y,0}\{t_1,\ldots,t_s\}/(g)\cong \Oh_{Y,0}\{t_1,\ldots,\widehat{t_{\overline i}},\ldots,t_s \}$ and $\phi|_{X'}$ is a smooth morphism.
\end{proof}
\noindent Now we can prove Theorem \ref{teo sing quadr}:
\begin{proof}
We start by assuming that $(Y,0)$ has quadratic algebraic singularities, so $(Y,0)$ is defined by the analytic algebra $\C\{y_1,\ldots, y_m\}/J$, where $J$ is an ideal generated by quadratic polynomials.
Since $\phi$ is smooth, we have $\Oh_{X,0}\cong \Oh_{Y,0}\{t_1,\ldots, t_s\} \cong \C\{y_1,\ldots, y_m\}\{t_1,\ldots,t_s\}/J$, for some $s$, and $X$ has quadratic algebraic singularities.

Now we prove the other implication. 
Let $\C\{x_1,\ldots,x_n\}/I$ and $\C\{y_1,\ldots,y_m\}/J$ be the analytic algebras, that define the germs $(X,0)$ and $(Y, 0)$ respectively, where $I$ is an ideal generated by quadratic polynomials.
We can assume that $\phi$ is not an isomorphism, otherwise the theorem is trivial.
Since $\phi$ is smooth, $\Oh_{X,0}\cong \Oh_{Y,0}\{t_1,\ldots,t_s\}$, for some $s>0$. \\
Now we can intersect $X\subset \C^N_x$ with hyperplanes $h_1, \ldots, h_s$ of $\C^N_x$, which correspond, by the isomorphism $\Oh_{X,0}\cong \Oh_{Y,0}\{t_1,\ldots,t_s\}$, to the hyperplanes of equations $t_1=0, \ldots, t_s=0$ of $\C^{m+s}_{y,t}$ and we call the intersection $X'$. 
Then $(X',0)$ has quadratic algebraic singularities. Moreover, by lemma \ref{lemma}, $\phi$ restricted to $(X',0)$ is a smooth morphism and it is bijective because $\Oh_{X',0}\cong \Oh_{X,0}/(h_1,\ldots,h_s)\cong \Oh_{Y,0}\{t_1,\ldots,t_s\}/(t_1,\ldots,t_s)\cong \Oh_{Y,0}$.
Thus $(Y,0)$ has quadratic algebraic singularities. 
\end{proof} 
\noindent This theorem assures that the set of germs of analytic spaces with quadratic algebraic singularities is closed under the relation $\sim$ and so it is a union of equivalent classes under this relation. Moreover we know that the relation $\sim$ defined between functors with hulls is the same as the relation $\sim$ defined between their germs.
Thus is natural to introduce the following definition for functors:
\begin{definition}
Let $\F$ be a functor with hull the germ of analytic space $(X,x)$. It is said to have \emph{quadratic algebraic singularities} if $(X,x)$ has quadratic algebraic singularities.
\end{definition}
\noindent This definition is indipendent by the choice of the germ of analytic space which is a hull of $\F$, because the isomorphisms class of a hull is uniquely determined.

\section{Brill-Noether stratification of $\M$} \label{def di N}
Let $X$ be a compact complex K\"{a}hler manifold of dimension $n$ and let $\M$ be the moduli space of locally free sheaves of $\Oh_X$-modules on $X$ which are stable and flat.
In the first part of this section we concentrate our interests to the study of subspaces of the moduli space $\M$, defined globally as sets in the following way:
\begin{definition}\label{N as set}
Let $h_i\in \mathbb{N}$ be fixed integers, for all $i= 0\ldots n$, we define:
$$ \mathcal N(h_0\ldots h_n)= \{\Eps' \in \M \mid \dim H^i(X,\Eps')=h_i  \}. $$
\end{definition}
It is obvious that, for a generic choice of the integers $h_i\in \mathbb N$, the subspace $\mathcal N(h_0\ldots h_n)$ is empty, from now on we fix our attention on non empty ones.
 
Let $\Eps\in\M$ be one fixed stable and flat locally free sheaf of $\Oh_X$-modules on $X$ and let $h_i=\dim H^i(X,\Eps)$. Let $\U\to M\times X$ be the universal Kuranishi family of deformations of $\Eps$, parametrized by the germ of analytic space $M$, which is isomorphic to a neighbourhood of $\Eps$ in $\M$ (see \cite{Nadel} for the construction). Let $\nu:M\times X\rightarrow M$ be projection, thus, for all $\Eps'\in M$, we have $\nu^{-1}(\Eps')\cong X$ and $\U|_{\nu^{-1}(\Eps')}=\U|_{\Eps'}\cong \Eps'$.

Now let's define the germ of the strata $\mathcal N(h_0\ldots h_n)$ at its point $\Eps$.
Since, for all $i=0\ldots n$, the function $\Eps'\in \M \rightarrow \dim H^i(X,\Eps')\in \N$ is upper semicontinuos, for Semicontinuity Theorem (see \cite{Hartshorne}, Theorem 12.8, ch.III), the set $U_i=\{\Eps' \in \M \mid \dim H^i(X,\Eps')\leq h_i \} $ and the intersection $U=\bigcap_{i=0\ldots n} U_i= \{\Eps' \in \M \mid \dim H^i(X,\Eps')\leq h_i,\textrm{ for } i=0\ldots n  \}$ are open subsets of $\M$.

For all $i=0\ldots n$, let $N_i(\Eps)=V(F_{h_i-1}(R^i\nu_*\U))=\{\Eps'\in M \mid \dim R^i\nu_*\U \otimes_{\Oh_{M}} k(\Eps') > h_i-1 \}$ be the closed subschemes of $M$ defined by the sheaf of ideals $F_{h_i-1}(R^i\nu_*\U)$, which is the sheaf of $(h_i-1)$-th Fitting ideals of the sheaf of $\Oh_M$-modules $R^i\nu_*\U$. Let $N(\Eps)=\bigcap_{i=0\ldots n}N_i(\Eps)$ be the closed subscheme of $M$ given by the intersection of the previous ones. 

\begin{definition} \label{N as scheme}
The germ of the strata $\mathcal N(h_0\ldots h_n)$ at its point $\Eps$ is given by:
$$ U\cap N(\Eps) = \{ \Eps' \in \M \mid \dim H^i(X,\Eps')\leq h_i, \ \ \forall i=0\ldots n\} \cap \bigcap_{i=0\ldots n} V(F_{h_i-1}(R^i\nu_*\mathcal U)).$$
\end{definition}

\begin{remark}
We observe that the support of the germ of the strata $\mathcal N(h_0\ldots h_n)$ at $\Eps$, defined in \ref{N as scheme}, coincide with a neighbourhood of $\Eps$ in set given in definition \ref{N as set}. 
Infact, for the Theorem of Cohomology and Base Change (see \cite{Hartshorne}, Theorem 12.11, ch.III), we have $R^n\nu_*\U \otimes_{\Oh_{M}} k(\Eps') \cong H^n(X,\Eps')$, then the condition which defines $N_n(\Eps)$ becomes $\dim H^n(X,\Eps') \geq h_n$ and the ones which define the intersection $U\cap N_n(\Eps)$ become $ \dim H^n(X,\Eps') = h_n$ and $\dim H^i(X,\Eps')\leq h_i$, for all $i=0\ldots n-1$. Applying iterative the Theorem of Cohomology and Base Change, we obtain $U\cap N(\Eps)=\{\Eps' \in M \mid \dim H^i(X,\Eps')=h_i, \textrm{ for } i=0\ldots n \} $ as we want.
\end{remark}

Now we prove the following:
\begin{prop} 
The germ of the strata $\mathcal N(h_0\ldots h_n)$ at $\Eps$ is the base space of a Kuranishi family of deformations of $\Eps$ which preserve the dimensions of cohomology spaces.
\end{prop}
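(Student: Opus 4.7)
The plan is to take $\tilde\U := \U|_{\tilde M \times X}$ as the candidate family, where $\tilde M := U \cap N(\Eps)$ is the germ described in Definition \ref{N as scheme}, and verify that $(\tilde M,\tilde\U)$ satisfies the defining properties of a Kuranishi family for deformations of $\Eps$ preserving the cohomology dimensions $h_0,\ldots,h_n$. First, I would confirm that every fibre of $\tilde\U$ has the prescribed cohomology dimensions: this is exactly the content of the preceding remark, obtained by an iterated application of the Cohomology and Base Change Theorem from $i=n$ downwards. Thus $\tilde\U$ really is a family of the required type.

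For the versal property, I would take an arbitrary deformation $\Eps_S$ of $\Eps$ parametrized by a germ of analytic space $S$ with $\dim H^i(X,(\Eps_S)_s)=h_i$ for all $s\in S$ and all $i=0,\ldots,n$, and produce a morphism $f\colon S\to \tilde M$ with $\Eps_S\cong (f\times\Id_X)^*\tilde\U$. The universal property of the Kuranishi family $(M,\U)$ of $\Eps$ supplies a morphism $f\colon S\to M$ with $\Eps_S\cong (f\times\Id_X)^*\U$; it then remains to show that $f$ factors through $\tilde M$. Factorisation through the open subgerm $U$ is immediate from the semicontinuity of $\dim H^i$. For factorisation through $N(\Eps)=\bigcap_i V(F_{h_i-1}(R^i\nu_*\U))$, I would use that Fitting ideals commute with arbitrary pullback of coherent sheaves, together with Grauert's Theorem applied to $\Eps_S$: since $\dim H^i(X,(\Eps_S)_s)=h_i$ is constant on $S$, the sheaf $R^i\nu'_*\Eps_S$ (with $\nu'\colon S\times X \to S$ the projection) is locally free of rank $h_i$, so $F_{h_i-1}(R^i\nu'_*\Eps_S)=0$. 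Combined with the base-change identification $f^*R^i\nu_*\U\cong R^i\nu'_*\Eps_S$, this yields $f^*F_{h_i-1}(R^i\nu_*\U)=0$ on $S$ and hence the desired factorisation through each $N_i(\Eps)$.

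The main obstacle is controlling the compatibility of the higher direct images $R^i\nu_*\U$ with base change along $f$, since $\dim H^i$ may jump on $M\setminus \tilde M$ and higher direct images do not commute with arbitrary base change in general. The key point is that the base change isomorphism is needed only at points of $S$, where the cohomology dimensions are constant by hypothesis; the behaviour of $R^i\nu_*\U$ over the rest of $M$ is irrelevant, because Fitting ideals always commute with arbitrary pullback of coherent sheaves and the vanishing one needs is tested on $S$ alone. Once the factorisation $f\colon S\to \tilde M$ is produced, the pullback identification $\Eps_S\cong (f\times\Id_X)^*\tilde\U$ is automatic from the corresponding identification over $M$.
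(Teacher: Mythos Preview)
Your proposal is correct and follows essentially the same route as the paper: restrict the universal Kuranishi family $\U$ to $\tilde M=U\cap N(\Eps)$, use the universal property of $(M,\U)$ to produce $f\colon S\to M$, and then show $f$ factors through the Fitting-ideal locus by combining the compatibility of Fitting ideals with pullback, the base-change identification $f^*R^i\nu_*\U\cong R^i\nu'_*\Eps_S$, and the local freeness of $R^i\nu'_*\Eps_S$. The paper's proof is exactly this argument, only stated a bit more tersely; your additional remarks (that $\tilde\U$ itself has the right fibrewise cohomology, and the discussion of where base change is actually needed) make explicit points the paper leaves implicit.
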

\begin{proof} 
Let $\mathcal F$ be a locally free sheaf of $\Oh_{T\times X}$-module on $T\times X$ which is a deformation of the sheaf $\Eps$ over the analytic space $T$ that preserve the dimensions of cohomology spaces. 
If the morphism $g: T\rightarrow M$ such that $(g\times \Id_X)^*\U\cong \mathcal F$, whose existence is assured by the universality of $\U$, can be factorized as in the following diagram:
$$
\xymatrix{ \mathcal F \cong (g\times \Id_X)^* \U  \ar[r] \ar[d] & \U \ar[d] \\
           T\times X \ar[r]^-{g\times \Id_X} \ar@{-->}[dr]_-{h\times \Id_X} & M \times X\\
                                                                        & (\mathcal N(h_0\ldots h_n)\cap M) \times X, \ar[u]_-{i\times \Id_X}} $$
then $\mathcal F\cong (g\times \Id_X)^* \U\cong (h\times \Id_X)^*(i\times \Id_X)^* \U \cong (h\times \Id_X)^* \U|_{(\mathcal N(h_0\ldots h_n) \cap M) \times X}$ and the restriction of $\U$ to $(\mathcal N(h_0\ldots h_n)\cap M )\times X$ satisfies the universal property.

Let's analise the pullback via the map $g$ of the sheaf of ideals $F_k(R^i\nu_*\U)$, for $i=0\ldots n$, which defines locally $\mathcal N(h_0\ldots h_n)\cap M$. Since Fitting ideals commute with base change, for all $i$ and $k$ we have $ g^* (F_k(R^i\nu_* \U ))= F_k(g^* R^i\nu_* \U)$. Let's consider the diagram:
$$
\xymatrix{ \mathcal F\cong (g\times \Id_X)^*\U \ar[r] \ar[d] & \U \ar[d] \\
T\times X  \ar[r]^-{g\times \Id_X}\ar[d]^-{\mu} & \imm g\times X \subset M \times X \ar[d]^-{\nu} \\      
T \ar[r]^-{g}  &  \imm g\subset M ;  } $$
using the Theorem of Cohomology and Base Change,
since for all $i=0\ldots n$ the functions $\Eps'\in \imm g\subset M \rightarrow h^i(X,\Eps')\in \N$ are costant, we have $F_k(g^* R^i\nu_* \U)\cong F_k(R^i\mu_* (g\times \Id_X)^*\U) \cong F_k(R^i\mu_*\mathcal F)$, and since $\mathcal F$ is a deformation which preserves the dimensions of cohomology spaces, the sheaves $R^i\mu_* \mathcal F$ are locally free and so the Fitting ideals $F_k(R^i\mu_*\mathcal F)$ are equal to zero. Then $g^*F_k(R^i\nu_*\U)$ is equal to zero, as we want.
\end{proof}

In the following part of this section we study infinitesimal deformations of $\Eps$ that preserve the dimensions of its cohomology spaces, explaining this condition and defining precisely the deformation functor associated to this problem.

\begin{definition}
A \emph{deformation} of the sheaf $\Eps$ on the manifold $X$ parametrized by an analytic space $S$ with a fixed point $s_0$ is the data of a locally free sheaf $\Eps'$ of $(\Oh_{X \times S})$-modules on $X\times S$ and a morphism $\Eps'\rightarrow \Eps$ inducing an isomorphism between $\Eps'|_{X\times s_0}$ and $\Eps$. 
\end{definition}
Let $\pi:X\times S\rightarrow S$ be the projection, then, for all $s\in S$, $\Eps'|_{\pi^{-1}(s)}=\Eps'|_{X\times s}=\Eps'_s$ is a locally free sheaf on $X$ and so it makes sense to calculate the cohomology spaces of these sheaves, $H^i(\Eps'_s)$. 

By the Theorem of Cohomology and Base Change, the condition that, for all $i\in \N$, $\dim H^i(\Eps'_s)$ is costant when $s$ varies in $S$, is equivalent to the condition that, for all $i\in \N$, the direct image $R^i\pi_*\Eps'$ is a locally free sheaf on $S$, and in this case we have that the fibre $R^i\pi_*\Eps'\otimes k(s)$ is  isomorphic to $H^i(\Eps'_{s})$.
\begin{definition} 
An \emph{infinitesimal deformation} of the sheaf $\Eps$ on the manifold $X$ over a local artinian $\C$-algebra $A$ with residue field $\C$ is the data of a locally free sheaf $\Eps_A$ of $(\Oh_X \otimes A)$-modules on $X\times \Spec A$ and a morphism $\Eps_A\rightarrow \Eps$ inducing an isomorphism $\Eps_A \otimes_A \C\cong \Eps$. 
\end{definition}
In the case of an infinitesimal deformation $\Eps_A$, we can replace the condition that the dimensions of the cohomology spaces are costant along the fibres of the projection $\pi:X\times \Spec A\rightarrow \Spec A$, with the condition that the direct images $R^i\pi_*\Eps_A$ are locally free sheaves, and in this case we have isomorphisms  $R^i\pi_*\Eps_A\otimes \C\cong H^i(\Eps)$.
We observe also that $H^i(\Eps_A)\cong R^i\pi_*\Eps_A(\Spec A)$. 

Our aim is to study this type of infinitesimal deformations of the sheaf $\Eps$ on $X$ and the functor associated to them, defined in the following
\begin{definition} \label{prima def di F}
Let $\Def^0_{\Eps}: Art_{\C}\rightarrow Set$ be the covariant functor defined, 
for all $A\in Art_{\C}$, by:
$$
\Def^0_{\Eps}(A)= \left\{\Eps_A \left| \begin{array}{l} \Eps_A \textrm{ is a deformation of the sheaf } \Eps \textrm{ over } A \\
R^i\pi_*\Eps_A \textrm{ is a locally free sheaf on } \Spec A \textrm{ for all } i\in \N   
\end{array}  \right\}\right. / \sim .
$$
\end{definition}
We note that, since $\mathcal N(h_0\ldots h_n)$ is a moduli space, the functor of deformations of the sheaf $\Eps$ which preserve the dimensions of cohomology spaces is prorepresented by $\mathcal N(h_0\ldots h_n)$.   

To give now two equivalent interpretation of $\Def^0_{\Eps}$, which are useful in the following, we start by defining 
a \emph{deformation} of a $\C$-vector space $V$ over a local artinian $\C$-algebra $A$ which is the data of a flat $A$-module $V_A$,  such that the projection onto the residue field induces an isomorphism $V_A\otimes_A \C\cong V$.
It is easy to see that every deformation of a vector space $V$ over $A$ is trivial, i.e. it is isomorphic to $V\otimes A$. 

If $\Eps_A$ is an infinitesimal deformation of $\Eps$ over $A$, it belongs to $\Def^0_{\Eps}(A)$, as defined in (\ref{prima def di F}), if and only if it is such that  $H^i(\Eps_A)$ are free $A$-modules, that is the same as flat $A$-modules since $A$ is local artinian, and $H^i(\Eps_A)\otimes_A \C\cong H^i(\Eps)$. Thus we have the two following equivalent definitions for the functor $\Def^0_{\Eps}$:
\begin{definition}
The functor $\Def^0_{\Eps}$ is defined, for all $\A\in Art_{\C}$, by:
$$ 
\Def^0_{\Eps}(A)= \left\{\Eps_A \left| \begin{array}{l} \Eps_A \textrm{ is a deformation of the sheaf } \Eps \textrm{ over } A \\
H^i(\Eps_A) \textrm{ is a deformation of } H^i(\Eps) \textrm{ over } A \textrm{ for all } i\in \N  
\end{array} \right\}\right. / \sim .
$$
or equivalently by:
\begin{equation} \label{def funtore F}
\Def^0_{\Eps}(A)= \left\{\Eps_A \left| \begin{array}{l} \Eps_A \textrm{ is a deformation of the sheaf } \Eps \textrm{ over } A \\
H^i(\Eps_A) \textrm{ is isomorphic to } H^i(\Eps)\otimes A  \textrm{ for all } i\in \N
\end{array}\right\}\right. / \sim .
\end{equation}
\end{definition}

\section{Definition of $\Def^0_{\Eps}$ using DGLAs} \label{F con DGLA}
We are interested in the study of the functor $\Def_{\Eps}^0$ and in this section we link it with the theory of deformations via DGLAs.\\

We start with some reminds about this theory. Let $L$ be a differential graded Lie algebra (DGLA), then it is defined a deformation functor $\Def_L: Art_{\C}\to Set$ canonically associated to it (see \cite{Man Pisa}, section 3). 
\begin{definition}
For all $(A,\mathfrak{m}_A)\in Art_{\C}$, we define:
$$\Def_L(A)=\frac{\MC_L(A)}{\sim_{\textrm{gauge}}},$$
where:
$$\MC_L(A)=\left\{x\in L^1\otimes \mathfrak{m}_A \mid dx+\frac{1}{2}[x,x]=0\right\}$$
and the gauge action is the action of $\exp (L^0\otimes \mathfrak{m}_A)$ on $\MC_L(A)$, given by:
$$e^a * x= x+\sum_{n=0}^{+\infty} \frac{([a,-])^n}{(n+1)!}([a,x]-da).$$
\end{definition} 
We recall that the tangent and an obstruction space to the deformation functor $\Def_L$ are the first and the second cohomology spaces of the DGLA $L$, $H^1(L)$ and $H^2(L)$. 
Moreover the deformation functor $\Def_L$ is a functor with good deformation theory and, if $H^1(L)$ is finite dimensional, it has a hull, but in general it is not prorepresentable.\\
If the functor of deformations of a geometrical object $\mathcal{X}$ is isomorphic to the deformation functor associated to a DGLA $L$, then we say that $L$ governs the deformations of $\mathcal{X}$.

Let $\chi:L\rightarrow M$ be a morphism of DGLAs which respects the DGLA structures of $L$ and $M$, then it is defined a deformation functor $\Def_{\chi}:Art_{\C}\to Set$ canonically associated to it (see \cite{Man sottovar}, section 2).
\begin{definition}
For all $(A, \mathfrak{m}_A)\in Art_{\C}$, we define:
$$\Def_\chi(A)=\frac{\MC_\chi(A)}{\sim_{\textrm{gauge}}},$$
where:
$$MC_{\chi}(A) =\left\{(x,e^a)\in (L^1\otimes \mathfrak{m}_A) \times \exp(M^0\otimes \mathfrak{m}_A ) \mid dx+\frac{1}{2}[x,x]=0, e^a* \chi(x)=0\right\}$$
and the gauge action is the action of $\exp(L^0\otimes \mathfrak{m}_A)\times \exp(dM^{-1}\otimes \mathfrak{m}_A)$ on $\MC_{\chi}(A)$, given by:
$$(e^l,e^{dm})*(x,e^a)= (e^l*x, e^{dm}e^a e^{-\chi(l)}).$$ 
\end{definition}
To write the tangent space and an obstruction space of a deformation functor associated to a morphism of DGLAs, we start by recalling that,  
if $\chi:L\rightarrow M$ is a morphism of DGLAs, the \emph{suspension of the mapping cone} of $\chi$ is defined to be the complex $C_{\chi}^i=L^{i}\oplus M^{i-1}$ with differential $\delta(l,m)=(d_Ll, \chi(l)-d_Mm)$.

Then the tangent space to the functor $\Def_\chi$ is the first cohomology space of the suspension of the mapping cone of the morphism $\chi$, $H^1(C_\chi)$, an obstruction space of $\Def_{\chi}$ is the second cohomology space, $H^2(C_\chi)$, and the obstruction theory for $\Def_{\chi}$ is complete.  \\
Moreover $\Def_{\chi}$ is a functor with good deformation theory and, if $H^1(C_{\chi})$ is finite dimensional, it has a hull, but in general it is not prorepresentable.\\
We also observe that
every commutative diagram of differential graded Lie algebras 
\begin{equation}\label{d} \xymatrix{ L \ar[r]^-{f} \ar[d]^-{\chi} & H \ar[d]^-{\eta} \\
              M \ar[r]^{f'} & I } \end{equation}
induces a morphism between the cones $C_\chi\rightarrow C_\eta$ and a morphism of functors $\Def_{\chi}\rightarrow \Def_{\eta}$, for which the following \emph{Inverse Function Theorem} (see \cite{Man sottovar}, Theorem 2.1) holds:  
\begin{theorem}  \label{inverse function theorem}
If the diagram (\ref{d}) induces a quasi isomorphism between the cones $C_\eta\rightarrow C_\chi$, then the induced morphism of functor $\Def_{\chi}\rightarrow \Def_{\eta}$ is an isomorphim. 
\end{theorem}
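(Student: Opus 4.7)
The plan is to apply the Standard Smoothness Criterion (Theorem \ref{criterio liscezza}) and then upgrade the resulting smoothness to an isomorphism by exploiting the bijection on tangent spaces. By naturality of all the constructions, the commutative diagram (\ref{d}) induces a chain map between the suspensions of the mapping cones, and the hypothesis that this map is a quasi-isomorphism gives $H^i(C_\chi) \cong H^i(C_\eta)$ for every $i$. In particular, the induced morphism $\Def_\chi \to \Def_\eta$ is bijective on the tangent spaces $H^1$ and bijective on the obstruction spaces $H^2$. Since the obstruction theory $(H^2(C_\chi), v_e)$ for $\Def_\chi$ is complete, both functors have good deformation theory, and the morphism is injective on obstructions and surjective on tangent spaces, Theorem \ref{criterio liscezza} yields that $\Def_\chi \to \Def_\eta$ is smooth.

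The second step is to promote smoothness together with bijectivity on tangent spaces to a full isomorphism. For prorepresentable functors this is automatic: a smooth morphism inducing an isomorphism of hulls is itself an isomorphism. For a general deformation functor with a hull the same conclusion holds provided one has sufficiently rigid control on the lifts and their equivalences; for the functors $\Def_\chi$ and $\Def_\eta$, this rigidity comes from their explicit description as $\MC_\chi(A)/\sim_{\textrm{gauge}}$ and $\MC_\eta(A)/\sim_{\textrm{gauge}}$. I would argue by induction on the length of $A \in Art_{\C}$: smoothness produces preimages of any $y \in \Def_\eta(A)$, yielding surjectivity, while the tangent-space bijection applied at each step of a small extension produces a gauge transformation matching any two preimages, yielding injectivity.

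The main obstacle is this final injectivity step, because the gauge action on $\MC_\chi(A)$ is nonabelian and involves the product $\exp(L^0 \otimes \mathfrak{m}_A) \times \exp(dM^{-1} \otimes \mathfrak{m}_A)$ acting on a Maurer-Cartan pair $(x, e^a)$. Given two such pairs with gauge-equivalent images in $\Def_\eta(A)$, one must lift the equivalence to an equivalence in $\Def_\chi(A)$; the iteration over the descending filtration by powers of $\mathfrak{m}_A$ uses the isomorphism on $H^1(C_\chi)$ at each step to annihilate the obstruction to such a lift, and the careful bookkeeping of the coboundary condition $\delta(l,m) = (d_Ll, \chi(l) - d_Mm)$ in the cone complex is where the quasi-isomorphism hypothesis really enters. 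Once this inductive argument is set up, it simultaneously delivers both surjectivity and injectivity, completing the proof that $\Def_\chi \to \Def_\eta$ is an isomorphism of functors.
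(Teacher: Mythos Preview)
The paper does not supply its own proof of this theorem: it is quoted as the \emph{Inverse Function Theorem} with a reference to \cite{Man sottovar}, Theorem~2.1, and is used as a black box in the Second Step of the Main Theorem. So there is no in-paper argument to compare yours against; what can be compared is your outline against the standard argument in the cited source.

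Your first step is correct and standard: the quasi-isomorphism on cones gives bijections on $H^1$ and $H^2$, and the Standard Smoothness Criterion (Theorem~\ref{criterio liscezza}) then yields smoothness of $\Def_\chi\to\Def_\eta$. This part of your approach matches what one finds in Manetti's work.

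Your second step is in the right spirit but the sketch is loose at the key point. You correctly observe that smoothness together with a tangent bijection only gives an isomorphism of hulls, not of functors, since $\Def_\chi$ and $\Def_\eta$ are generally not prorepresentable. Your proposed remedy, an induction on the length of $A$ to lift gauge equivalences, is indeed the mechanism behind the actual proof. The point to be careful about is this: after reducing modulo $\mathfrak m_A^{n}$ and taking differences, one obtains a $1$-cocycle $(x_1-x_2,\,a_1-a_2)$ in $C_\chi^1\otimes(\mathfrak m_A^{n}/\mathfrak m_A^{n+1})$, and one must show it is a $\delta$-coboundary, i.e.\ of the form $\delta(l,m)=(d_Ll,\,\chi(l)-d_Mm)$, so that $(e^l,e^{dm})$ furnishes the next gauge correction. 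Establishing that this cocycle maps to a coboundary in $C_\eta^1$ requires not only knowing that the images are gauge-equivalent in $\Def_\eta(A)$, but also controlling how the gauge transformation already applied at the previous inductive step interacts with the given equivalence downstairs; this compatibility is where the bookkeeping becomes nontrivial, and your description (``the tangent-space bijection applied at each step \dots\ produces a gauge transformation'') elides it. Manetti's proof in \cite{Man sottovar} handles this by working with the full Deligne groupoid rather than the set-valued quotient, which is what makes the induction go through cleanly; your sketch is a plausible compression of that argument, but as written it does not yet constitute a proof.
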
             
\ \\
Now we return to our situation, so let $X$ be a compact complex K\"{a}hler manifold of dimension $n$ and let $\Eps$ be a stable and flat locally free sheaf of $\Oh_X$-modules on $X$, with $\dim H^i(X, \Eps)=h_i$, for all $i=0\ldots n$.

Let $A_X^{(0,*)}(\End\Eps)$ be the DGLA of the $(0,*)$-forms on $X$ with values in the sheaf of the endomorphisms of $\Eps$. It can be proved the following result (see \cite{Fuk}, Theorem 1.1.1):
\begin{prop}
The deformation functor $\Def_{A_X^{(0,*)}(\End\Eps)}$ is isomorphic to the functor of deformations of the sheaf $\Eps$, $\Def_{\Eps}$. The isomorphism is given, for all $A\in Art_\C$, by 
$$\begin{array}{rll}
\Def_{A_X^{(0,*)}(\End\Eps)}(A)& \longrightarrow &\Def_{\Eps}(A) \\
x& \longrightarrow & \ker(\deltabar+x)
\end{array}$$
\end{prop}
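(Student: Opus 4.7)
The plan is to establish the isomorphism $\Phi:\Def_{A_X^{(0,*)}(\End\Eps)} \to \Def_{\Eps}$ in three main steps, after which naturality in $A$ is immediate. First I would unpack a deformation $\Eps_A \in \Def_{\Eps}(A)$ in Dolbeault terms. Since $\Spec A$ is an infinitesimal thickening of a point, the underlying smooth complex vector bundle of $\Eps_A$ is (non-canonically) isomorphic to the smooth bundle of $\Eps$ tensored with $A$. After fixing such a smooth trivialization, the holomorphic structure of $\Eps_A$ is encoded by an operator $D_A:A_X^{(0,*)}(\Eps)\otimes A \to A_X^{(0,*+1)}(\Eps)\otimes A$ that satisfies the Leibniz rule and reduces to $\deltabar$ modulo $\mathfrak{m}_A$; these two conditions force $D_A=\deltabar+x$ for a unique $x \in A_X^{(0,1)}(\End\Eps)\otimes \mathfrak{m}_A$.

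Second I would match integrability with Maurer--Cartan. The condition $D_A^2=0$, which encodes the integrability of the deformed holomorphic structure, expands to $\deltabar x+\tfrac{1}{2}[x,x]=0$, that is, $x\in \MC_L(A)$ with $L=A_X^{(0,*)}(\End\Eps)$. Conversely, given any $x\in \MC_L(A)$, one must show that $\ker(\deltabar+x)$, viewed as a subsheaf of $A_X^{(0,0)}(\Eps)\otimes A$, is a locally free $\Oh_X\otimes A$-module whose reduction modulo $\mathfrak{m}_A$ is $\Eps$. This is the crucial technical point: by induction on the length of $A$, one uses a small extension $0\to (t)\to A\to A/(t)\to 0$ together with the $\deltabar$-Poincar\'e lemma on polydisks to lift local holomorphic frames of $\Eps$ to local frames of $\ker(\deltabar+x)$ and to verify flatness of the kernel over $A$.

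Third I would identify gauge equivalence with isomorphism of deformations. A change of smooth trivialization of the underlying bundle of $\Eps_A$ that reduces to the identity modulo $\mathfrak{m}_A$ is precisely an element of $\exp(A_X^{(0,0)}(\End\Eps)\otimes\mathfrak{m}_A)=\exp(L^0\otimes\mathfrak{m}_A)$, and its action on Dolbeault operators by conjugation, $e^a\circ(\deltabar+x)\circ e^{-a}=\deltabar+e^a*x$, coincides with the gauge action on $\MC_L(A)$. Hence $\Phi$ descends to a well-defined map on gauge classes, and a two-sided inverse is obtained by sending $\Eps_A$ to the class of the $x$ read off from any chosen smooth trivialization; independence of the choice is exactly gauge invariance, and naturality in $A$ follows from the functoriality of all the constructions.

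The principal obstacle is the local-freeness and base-change claim in the second step. Although the formal calculation $(\deltabar+x)^2=0$ and the expected rank suggest the answer, one must genuinely work on a Stein (polydisk) covering of $X$ and produce explicit holomorphic frames, typically by solving $\deltabar$-equations order by order along the filtration of $A$ by powers of $\mathfrak{m}_A$; this is where the complex analytic structure of $X$ enters essentially. The remaining points -- the passage from integrability to Maurer--Cartan, from gauge action to isomorphism of deformations, and the functorial compatibilities -- reduce to formal manipulations inside the Dolbeault complex.
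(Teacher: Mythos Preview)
The paper does not actually prove this proposition: it states the result and refers the reader to \cite{Fuk}, Theorem~1.1.1, for a proof. So there is no argument in the paper to compare your proposal against.

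That said, your outline is the standard and correct route to this result, and it is essentially what one finds in the cited literature. The three-step decomposition (Dolbeault description of a deformation, integrability $\Leftrightarrow$ Maurer--Cartan, gauge $\Leftrightarrow$ isomorphism) is exactly right, and you have correctly identified the one genuinely nontrivial point: showing that $\ker(\deltabar+x)$ is locally free over $\Oh_X\otimes A$ with the correct reduction. Your proposed mechanism---induction on the length of $A$ via small extensions, combined with the $\deltabar$-Poincar\'e lemma on polydisks to lift local holomorphic frames---is the standard way to handle this, and it works. One small remark: in the first step you might make explicit that the non-canonical smooth identification of the underlying $C^\infty$ bundle with $\Eps\otimes A$ uses that $A$ is Artinian (so that the extension of the structure group splits smoothly, e.g.\ by lifting a partition-of-unity frame); this is implicit in what you wrote but worth a sentence if you flesh the argument out.
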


Let $(A_X^{(0,*)}(\Eps), \deltabar)$ be the complex of the $(0,*)$-forms on $X$ with values in the sheaf $\Eps$ with the Dolbeault differential and let $\Hom^*(A_X^{(0,*)}(\Eps),A_X^{(0,*)}(\Eps))$ be the DGLA of the homomorphisms of this complex. 

We recall that a \emph{deformation} of a complex of vector spaces $(V^i,d)$ over a local artinian $\C$-algebra $A$ with residue field $\C$ is a complex of $A$-modules of the form $(V^i\otimes A,d_A)$, such that the projection onto the residue field induces an isomorphism between $(V^i\otimes A,d_A)$ and $(V^i,d)$.

It is easy to prove the following result (see \cite{Man Seattle}, pages 3-4):
\begin{prop} 
The deformation functor $\Def_{\Hom^*(A_X^{(0,*)}(\Eps),A_X^{(0,*)}(\Eps))}$ is isomorphic to the functor of deformations of the complex $(A_X^{(0,*)}(\Eps), \deltabar)$, $\Def_{(A_X^{(0,*)}(\Eps), \deltabar)}$. The isomorphism is given, for all $A\in Art_\C$, by:
$$ \begin{array}{rll}  \Def_{\Hom^*(A_X^{(0,*)}(\Eps),A_X^{(0,*)}(\Eps))}(A)& \longrightarrow & \Def_{(A_X^{(0,*)}(\Eps), \deltabar)}(A)\\
x &\longrightarrow & (A_X^{(0,*)}(\Eps)\otimes A, \deltabar+x).
\end{array}
$$
\end{prop}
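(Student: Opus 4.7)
The plan is to construct the inverse map explicitly and verify that both directions respect the equivalence relations defining the two deformation functors, working levelwise in $A\in Art_\C$ and noting that naturality in $A$ is automatic. The starting point is to unwind the DGLA structure on $L := \Hom^*(A_X^{(0,*)}(\Eps),A_X^{(0,*)}(\Eps))$: the differential sends a homogeneous $f$ to $[\deltabar,f]$ (graded commutator) and the Lie bracket is the graded commutator of endomorphisms. For $x\in L^1\otimes\mathfrak{m}_A$ the Maurer--Cartan equation $dx+\tfrac12[x,x]=0$ expands to $\deltabar\circ x+x\circ\deltabar+x\circ x=0$, which is exactly the condition $(\deltabar+x)^2=0$ inside $\End_A(A_X^{(0,*)}(\Eps)\otimes A)$. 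Hence the map $x\mapsto (A_X^{(0,*)}(\Eps)\otimes A,\deltabar+x)$ produces a complex of flat $A$-modules that reduces to $(A_X^{(0,*)}(\Eps),\deltabar)$ modulo $\mathfrak{m}_A$, i.e.\ a genuine deformation of the complex.

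Next I would build the inverse. Given any deformation $(A_X^{(0,*)}(\Eps)\otimes A, d_A)$, set $x:=d_A-\deltabar\otimes\Id_A$; by $A$-linearity and degree, $x$ lies in $L^1\otimes A$, and the reduction condition forces $x\in L^1\otimes \mathfrak{m}_A$, while $d_A^2=0$ gives back the Maurer--Cartan equation. This shows the correspondence is a bijection before quotienting. To pass to equivalence classes I would observe that an isomorphism of two such deformations is an $A$-linear, degree zero automorphism $\phi$ of $A_X^{(0,*)}(\Eps)\otimes A$ which lifts the identity and satisfies $\phi\circ(\deltabar+x)=(\deltabar+x')\circ\phi$. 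Since $\phi-\Id$ is nilpotent (as $\mathfrak{m}_A$ is nilpotent in $A$), one can write $\phi=e^a$ uniquely for $a\in L^0\otimes\mathfrak{m}_A$, so the chain condition becomes $\deltabar+x'=e^a(\deltabar+x)e^{-a}$, i.e.\ $x'=e^a(\deltabar+x)e^{-a}-\deltabar$.

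The single nontrivial point is recognizing that the right-hand side of this last identity is exactly the gauge action $e^a*x$ as defined in the DGLA $L$. This follows from the standard formal identity: inside the extended algebra $L\oplus\C\cdot\deltabar$ (with $[\deltabar,-]$ the differential), conjugation by $e^a$ on the ``twisted differential'' $\deltabar+x$, subtracted by $\deltabar$, reproduces the closed-form sum $x+\sum_{n\ge 0}\tfrac{([a,-])^n}{(n+1)!}([a,x]-da)$. This is a purely algebraic identity in formal power series and is the only step that needs real care; once verified, the matching of equivalence relations is immediate. The remaining checks (naturality in $A$, compatibility with morphisms $A\to A'$) are routine, and the proposition follows.
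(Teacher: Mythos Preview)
Your proof is correct and is exactly the standard argument; the paper does not supply its own proof but merely cites \cite{Man Seattle}, pages 3--4, where precisely this computation is carried out. Your unwinding of the Maurer--Cartan equation as $(\deltabar+x)^2=0$ and the identification of gauge equivalence with conjugation $e^a(\deltabar+x)e^{-a}-\deltabar$ is the expected route, so there is nothing to add.
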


Let $\chi: A_X^{(0,*)}(\End\Eps)\rightarrow \Hom^*(A_X^{(0,*)}(\Eps),A_X^{(0,*)}(\Eps))$ be the natural inclusion of DGLAs and let $\Def_{\chi}$ be the deformation functor associated to $\chi$. 
Let $(x,e^a)\in \MC_{\chi}(A)$, for $A\in Art_{\C}$. 
Since $x\in A_X^{(0,1)}(\End\Eps)\otimes \mathfrak{m}_A$ satisfies the Maurer-Cartan equation, it gives a deformation $\Eps_A=\ker(\deltabar+x)$ of $\Eps$ over $A$. While $e^a\in \exp(\Hom^0(A_X^{(0,*)}(\Eps),A_X^{(0,*)}(\Eps))\otimes \mathfrak{m}_A)$ gives a gauge equivalence between $\chi(x)=x$ and zero in the DGLA $\Hom^*(A_X^{(0,*)}(\Eps),A_X^{(0,*)}(\Eps))$. 
Thus $e^a$ is an isomorphism between the two correspondent deformations of the complex $(A_X^{(0,*)}(\Eps),\deltabar)$ 
or equivalently $e^a$ is an isomorphism between the cohomology spaces $H^i(\Eps_A)$ and  $H^i(\Eps)\otimes A$, for all $i\in \N$. 
Thus:
$$
\Def_{\chi}(A)= \left\{(\Eps_A,f^i_A) \left| \begin{array}{ll} \Eps_A & \textrm{is a deformation of the sheaf } \Eps \textrm{ over } A \\
f^i_A  & \textrm{is the isomorphism } f^i_A: H^i(\Eps_A) \rightarrow H^i(\Eps)\otimes A \textrm{ for all } i\in \N 
\end{array}  \right\}\right. .
$$

Now let $\Phi$ be the morphism of functors given, for all $A\in Art_{\C}$, by:
$$
\begin{array}{cccl}
\Phi: &  \Def_{\chi}(A) & \longrightarrow &\Def_{A_X^{(0,*)}(\End\Eps)}(A) \\
      & (x,e^a)      & \longrightarrow &  x
\end{array}      
$$
With the above geometric interpretations of the functors $\Def_{\chi}$ and $\Def_{A_X^{(0,*)}(\End\Eps)}(A)$, the morphism $\Phi$ is the one which associates to every pair $(\Eps_A, f^i_A)\in \Def_{\chi}(A)$ the element $\Eps_A\in \Def_{A_X^{(0,*)}(\End\Eps)}(A)$. Thus we have the following characterization of $\Def^0_{\Eps}$ using DGLAs point of view (see \cite{Man sottovar}, Lemma 4.1).
\begin{prop}
The subfunctor $\Def^0_{\Eps}$ is isomorphic to the image of the morphism $\Phi: \Def_{\chi}\rightarrow \Def_{A^{(0,*)}_X(\End\Eps)}$. 
\end{prop}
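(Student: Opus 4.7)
My plan is to verify directly that the image of $\Phi$ coincides with the subfunctor $\Def^0_{\Eps} \subseteq \Def_{A_X^{(0,*)}(\End \Eps)} \cong \Def_{\Eps}$. Since $\Phi$ sends a class represented by $(x, e^a) \in \MC_\chi(A)$ to the class of $x$, an element $\Eps_A = \ker(\deltabar + x)$ lies in $\im \Phi(A)$ precisely when there exists $e^a \in \exp(M^0 \otimes \mathfrak{m}_A)$ with $e^a * \chi(x) = 0$, that is, an isomorphism of $A$-complexes from $(A_X^{(0,*)}(\Eps) \otimes A, \deltabar + x)$ to $(A_X^{(0,*)}(\Eps) \otimes A, \deltabar)$ reducing to the identity modulo $\mathfrak{m}_A$. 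The task is therefore to show that such an isomorphism exists if and only if the condition (\ref{def funtore F}) defining $\Def^0_{\Eps}(A)$ holds.

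The inclusion $\im \Phi \subseteq \Def^0_{\Eps}$ is immediate: the gauge $e^a$ is already an isomorphism of $A$-complexes, so passing to cohomology it supplies isomorphisms $H^i(\Eps_A) \cong H^i(\Eps) \otimes A$ for every $i$, which is exactly (\ref{def funtore F}).

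For the reverse inclusion I would start from $\Eps_A \in \Def^0_{\Eps}(A)$ corresponding to a Maurer-Cartan element $x \in A_X^{(0,1)}(\End \Eps) \otimes \mathfrak{m}_A$ and construct $e^a$ trivializing $\chi(x)$ in $M$. The approach is to exploit formality of bounded complexes of $\C$-vector spaces: after choosing a Hermitian metric on $X$ and $\Eps$ and passing to harmonic representatives, the DGLA $M = \Hom^*(A_X^{(0,*)}(\Eps), A_X^{(0,*)}(\Eps))$ becomes quasi-isomorphic to $\Hom^*(H^*(\Eps), H^*(\Eps))$ equipped with zero differential, so the class of $\chi(x)$ in $\Def_M(A)$ corresponds to a square-zero perturbed differential $\tilde x$ on $H^*(\Eps) \otimes A$. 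The hypothesis that $H^i(\Eps_A)$ is free over $A$ of rank $h_i = \dim H^i(\Eps)$ for every $i$ then forces, by a length/Euler-characteristic count on $(H^*(\Eps) \otimes A, \tilde x)$, that $\tilde x = 0$; hence $\chi(x)$ is gauge equivalent to zero in $M$ and the required $e^a$ exists.

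The main obstacle is this reverse implication, i.e.\ the nontrivial fact that a deformation of the Dolbeault complex with free cohomology of the expected rank is isomorphic to the trivial deformation; this is essentially the content of \cite{Man sottovar}, Lemma 4.1, and ultimately rests on formality over a field of characteristic zero. A more hands-on alternative would be to proceed by induction on the length of $A$ through small extensions $0 \to I \to A_n \to A_{n-1} \to 0$ with $\mathfrak{m}_{A_n} I = 0$, lifting the trivialization $e^{a_{n-1}}$ at each step and checking that the obstruction in the relevant cohomology group vanishes under the hypothesis on $H^i(\Eps_A)$.
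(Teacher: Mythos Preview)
Your proposal is correct and follows the same line as the paper: the paper's argument consists precisely of the geometric interpretation you give (that $\Def_{\chi}(A)$ parametrizes pairs $(\Eps_A, f^i_A)$ and $\Phi$ forgets the $f^i_A$), together with a citation to \cite{Man sottovar}, Lemma~4.1, for the nontrivial reverse inclusion. You go further by actually sketching how that lemma is proved, via the formality of $\Hom^*(A_X^{(0,*)}(\Eps),A_X^{(0,*)}(\Eps))$ and a length count on the transferred differential $\tilde x$; this is a genuine addition, and the zig-zag of quasi-isomorphisms you invoke is exactly the one the paper writes down later in the Second Step of the Main Theorem. The only point that deserves one more sentence of justification is why the cohomology of the transferred complex $(H^*(\Eps)\otimes A,\tilde x)$ agrees with $H^*(\Eps_A)$: this follows because the zig-zag through the intermediate DGLA $M^*$ of homomorphisms preserving harmonic forms is induced by an honest inclusion of complexes $\H_X^{(0,*)}(\Eps)\hookrightarrow A_X^{(0,*)}(\Eps)$, so the corresponding deformed complexes remain quasi-isomorphic over $A$.
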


\section{Proof of the Main Theorem} \label{proof of the main theorem}
This section is devoted to the proof of the Main Theorem. With the above notations
\begin{theorem}[Main Theorem]
The Brill-Noether strata $\mathcal N(h_0\ldots h_n)$ have quadratic algebraic singularities.
\end{theorem}
\begin{proof}
The local study of the strata $\mathcal N(h_0\ldots h_n)$ at one of its point $\Eps$, corresponds to the study of a germ of analytic space which prorepresents the functor $\Def^0_{\Eps}$. 
Our proof is divided into four steps in which we find out a chain of functors, linked each other by smooth morphisms, from the functor $\Def_{\Eps}^0$ to a deformation functor for which it is known that the germ of analytic space that prorepresents it has quadratic algebraic singularities. Then, we  conclude, using properties proved in section \ref{type of singularities}.

\ \\
\emph{First Step.}
We prove that the morphism $\Phi: \Def_{\chi}\rightarrow \Def_{\Eps}^0$ is smooth. 
Then, given a principal extension in $Art_{\C}$, $0\rightarrow J\rightarrow B\stackrel{\alpha}{\rightarrow}A\rightarrow 0$, and an element $(\Eps_A, f^i_A)\in \Def_{\chi}(A)$, we have to prove that, if its image $\Eps_A\in \Def_{\Eps}^0(A)$ has a lifting $\Eps_B\in \Def_{\Eps}^0(B)$, it has a lifting in $\Def_{\chi}(B)$. 

Since $\Eps_A\in \Def_{\Eps}^0(A)$ and $ \Eps_B\in \Def_{\Eps}^0(B)$, their cohomology spaces are deformations of $H^i(\Eps)$ over $A$ and $B$ respectively and so $H^i(\Eps_A)\cong H^i(\Eps)\otimes A$  and $H^i(\Eps_B)\cong H^i(\Eps)\otimes B$. Thus $H^i(\Eps_B)$ is a lifting of $H^i(\Eps_A)$ and it is a polynomial algebra over $B$. 
It follows that in the diagram
$$
\xymatrix{ H^i(\Eps_B) \ar[r]\ar@{-->}[drr]^{f^i_B} & H^i(\Eps_A) \ar[r]^-{f^i_A} &  H^i(\Eps)\otimes A \\      
           B \ar[u]\ar[rr]        &          & H^i(\Eps)\otimes B \ar@{->>}[u]_-{\beta=\Id\otimes\alpha}      } 
$$
there exists a homomorphism $f^i_B: H^i(\Eps_B)\rightarrow H^i(\Eps)\otimes B$, which lifts $f^i_A$. 
Then also the following diagram commutes:
$$\xymatrix@dr{  H^i(\Eps)  &  H^i(\Eps)\otimes B \ar[l]   \\
                 H^i( \Eps_B) \ar[u] \ar[ur]^-{f^i_B}  &  B \ar[u]\ar[l]}$$
and so $f^i_B$ is an isomorphism, for all $i\in \N$.

\ \\
\emph{Second Step.}
Since $X$ is a K\"{a}hler manifold and $\Eps$ is a hermitian sheaf, the operators $\deltabar_\Eps^*$, adjoint of $\deltabar_{\Eps}$, and the Laplacian $\overline{\square}_\Eps=\deltabar_\Eps\deltabar^*_\Eps + \deltabar^*_\Eps \deltabar_\Eps$ can be defined between forms on $X$ with values in the sheaf $\Eps$. 
Let $\H_X^{(0,*)}(\Eps)=\ker \overline{\square}_{\Eps}$ be the complex of $(0,*)$-harmonic forms on $X$ with values in $\Eps$ and let $\Hom^*(\H_X^{(0,*)}(\Eps), \H_X^{(0,*)}(\Eps))$ be the formal DGLA of the homomorphisms of this complex.

Also for the sheaf $\End\Eps$ the operator $\deltabar_{\End\Eps}^*$, adjoint of $\deltabar_{\End\Eps}$, and the Laplacian $\overline{\square}_{\End\Eps}=\deltabar_{\End\Eps}\deltabar^*_{\End\Eps} + \deltabar^*_{\End\Eps} \deltabar_{\End\Eps}$ can be defined. 
Let $\H_X^{0,*}(\End\Eps)=\ker \overline{\square}_{\End\Eps}$ be the complex of the $(0,*)$-harmonic forms on $X$ with values in $\End\Eps$.

Siu proved (see \cite{Siu}) that, for a flat holomorphic vector bundle $\mathcal L$ on a K\"{a}hler manifold $X$, the two Laplacian operators  $\overline{\square}_{\mathcal L}$ and $\square_{\mathcal L}$ coincide. Then a $(0,*)$-form on $X$ with values in $\mathcal L$ is harmonic if and only if it anhilates $\partial$, which is well defined because $\mathcal L$ is flat. 

Since $\End\Eps$ is flat, these facts imply that the complex $\H_X^{0,*}(\End\Eps)$ is a DGLA with bracket given by the wedge product on forms and the composition of endomorphisms.  

Moreover, we can define a morphism $\Omega: \H_X^{(0,*)}(\End\Eps)\rightarrow  \Hom^*(\H_X^{(0,*)}(\Eps), \H_X^{(0,*)}(\Eps) )$. 
Every element $x \in\A_X^{(0,*)}(\End\Eps)$ gives naturally an homomorphism $\Omega(x)$ from $A_X^{(0,*)}(\Eps)$ in itself, defined locally to be the wedge product between forms and the action of the endomorphism on the elements of $\Eps$. If we defined it on an open cover of $X$ on which both the sheaves $\Eps$ and $\End\Eps$ have costant transition functions, when $x \in \H_X^{(0,*)}(\End\Eps)$ and $\Omega(x)$ is restricted to the harmonic forms $\H_X^{(0,*)}(\Eps)$, it gives as a result an harmonic form. 
Let $\Omega: \H_X^{(0,*)}(\End\Eps)\rightarrow  \Hom^*(\H_X^{(0,*)}(\Eps), \H_X^{(0,*)}(\Eps) ) $ be the DGLAs morphism just defined and let  $\Def_{\Omega}$ be the deformation functor associated to it. 

We want to prove that the two functors $\Def_{\Omega}$ and $\Def_{\chi}$ are isomorphic. Then we consider the following commutative diagram: 
$$\xymatrix{ A_X^{(0,*)}(\End\Eps) \ar[d]^-{\chi} & \H^{(0,*)}_X(\End\Eps) \ar[d]^-{\eta} \ar[l]_-{\alpha}\ar[r]^-{\gamma} & \H_X^{(0,*)}(\End\Eps)\ar[d]^-{\Omega} \\
             \Hom^*(A_X^{(0,*)}(\Eps),A_X^{(0,*)}(\Eps)) & M^* \ar[l]_-{\beta}\ar[r]^-{\delta} & \Hom^*(\H_X^{(0,*)}(\Eps),\H_X^{(0,*)}(\Eps))         }$$
where $M^*=\left\{\varphi \in \Hom^*(A_X^{(0,*)}(\Eps),A_X^{(0,*)}(\Eps) )\mid  \varphi (\H_X^{(0,*)}(\Eps))\subseteq \H_X^{(0,*)}(\Eps) \right\}$.\\
The morphism $\beta$ is a quasiisomorphism, infact it is injective and $\coker \beta = \Hom^*(A_X^{(0,*)}(\Eps), A_X^{(0,*)}(\Eps))/ M^*$ $\cong \Hom^*( A_X^{(0,*)}(\Eps), A_X^{(0,*)}(\Eps)/\H_X^{(0,*)}(\Eps) )$ is an acyclic complex. Then $\alpha$ and $\beta$ induce a quasiisomorphism between the cones $C_{\eta}\rightarrow C_\chi$ and so, by the Inverse Function Theorem (Theorem \ref{inverse function theorem}), an isomorphism between the functors $\Def_\eta\rightarrow\Def_\chi$.

Also the morphism $\delta$ is a quasiisomorphism, infact it is surjective and its kernel is $\ker \delta=$ $\left\{\varphi \in \Hom^*(A_X^{(0,*)}(\Eps),  A_X^{(0,*)}(\Eps)) \mid  \varphi(\H_X^{(0,*)}(\Eps))=0  \right\}$ that isomorphic to the acyclic complex $\Hom^*( A_X^{(0,*)}(\Eps)/\H_X^{(0,*)}(\Eps), A_X^{(0,*)}(\Eps))$.
Then $\gamma$ and $\delta$ induce a quasiisomorphism between the cones $C_\eta\rightarrow C_\Omega$ and so an isomorphism between the functors $\Def_{\eta}\rightarrow \Def_{\Omega}$.

\ \\
\emph{Third Step.}
Let $\tilde\H_X(\End\Eps)$ be the DGLA equal to zero in zero degree and equal to $\H_X^{(0,*)}(\End\Eps)$ in positive degrees, with zero differential and bracket given by wedge product on forms and composition of endomorphisms. \\
Let $\tilde\Omega: \tilde\H_X^{(0,*)}(\End\Eps)\rightarrow \Hom^*(\H_X^{(0,*)}(\Eps),\H_X^{(0,*)}(\Eps))$ be the DGLAs morphism defined as in the previous step and let $\Def_{\tilde\Omega}$ be the deformation functor associated to it.\\
The inclusion $\tilde\H_X(\End\Eps) \hookrightarrow \H_X(\End\Eps)$ and the identity on $\Hom(\H_X^{(0,*)}(\Eps),\H_X^{(0,*)}(\Eps))$ induce a morphism of functors $\Psi:\Def_{\tilde\Omega}\rightarrow \Def_{\Omega}$. \\
We note that the morphism induced by $\Psi$ between the cohomology spaces of cones of $\Omega$ and $\tilde\Omega$ respectively is bijective in degree greater equal than $2$ and it is surjective in degree $1$. 
Thus, using the Standard Smootheness Criterion (Theorem \ref{criterio liscezza}), we conclude that $\Psi$ is smooth.

\ \\
\emph{Fourth Step.}
Let's write explicitly $\Def_{\tilde\Omega}$.
The functor $\MC_{\tilde\Omega}$, for all $A\in Art_{\C}$, is given by:
$$ \MC_{\tilde\Omega}(A)= \left\{(x,e^a)\in (L^1\otimes \mathfrak{m}_A) \times \exp(M^0\otimes \mathfrak{m}_A ) \mid dx+\frac{1}{2}[x,x]=0, e^a* \tilde\Omega(x)=0\right\}$$
where $L^*=\tilde\H_X^{(0,*)}(\End\Eps)$ and $M^*=\Hom^*(\H^{(0,*)}_X(\Eps),\H^{(0,*)}_X(\Eps))$. 
Since the differential in the DGLA $\tilde\H_X^{(0,*)}(\End\Eps)$ is zero and since the equation $e^a*\tilde\Omega (x)=0$ can be written as  $\tilde\Omega (x)=e^{-a}*0=0$, we obtain, for all $A\in Art_{\C}$:
$$\MC_{\tilde\Omega}(A)=\left\{ x\in \ker\tilde\Omega\otimes \mathfrak{m}_A \mid [x,x]=0 \right\}\times \exp(\Hom^0(\H^{(0,*)}_X(\Eps),\H^{(0,*)}_X(\Eps)) \otimes \mathfrak{m}_A).$$
Moreover $\exp(\tilde\H_X^{(0,0)}(\End\Eps) \otimes \mathfrak{m}_A )\times \exp(d\Hom^{-1}(\H^{(0,*)}_X(\Eps),\H^{(0,*)}_X(\Eps))\otimes \mathfrak{m}_A)$ is equal to zero, thus there isn't gauge action.  
Thus, for all $A\in Art_{\C}$, we have:
$$\Def_{\tilde\Omega}(A)=\left\{ x\in \ker\tilde\Omega\otimes \mathfrak{m}_A \mid [x,x]=0 \right\}\times \exp(\Hom^0(\H^{(0,*)}_X(\Eps),\H^{(0,*)}_X(\Eps)) \otimes \mathfrak{m}_A).$$ 
Since $\tilde\Omega$ is a DGLAs morphism, $\ker\tilde\Omega$ is a DGLA and it is defined the deformation functor $\Def_{\ker\tilde\Omega}$ associated to it.
Now, for all $A\in Art_{\C}$, we obtain:
$$
\Def_{\tilde\Omega}(A)=\Def_{\ker\tilde\Omega}(A) \times \exp(\Hom^0(\H^{(0,*)}_X(\Eps),\H^{(0,*)}_X(\Eps)) \otimes \mathfrak{m}_A).
$$
The DGLA $\ker\tilde{\Omega}$ has zero differential, so the functor $\Def_{\ker\tilde\Omega}$ is prorepresented by the germ in zero of the quadratic cone (see \cite{Goldman-Millson 2}, Theorem 5.3):
$$ X= \{ x\in \ker^1\tilde\Omega \mid [x,x]=0  \},$$
that has quadratic algebraic singularities. Then also the functor $\Def_{\tilde\Omega}$ is prorepresented by a germ of analytic space with quadratic algebraic singularities.

\ \\
\emph{Conclusion.}
Since now we have constructed smooth morphisms between the functor $\Def_{\Eps}^0$ and the functor $\Def_{\tilde\Omega}$:
$$
\xymatrix{\Def_{\Eps}^0& & \Def_{\chi}\ar[ll]_{\textrm{smooth}}  \ar@{<->}[rr]^{\cong}_{\textrm{isomorphism}} &  &\Def_{\Omega}& &\Def_{\tilde\Omega}\ar[ll]_{\textrm{smooth}} .} $$
By Proposition \ref{prop formalita}, there exists a smooth morphism between the germs of analytic spaces which are hulls of the two functors $\Def_{\Eps}^0$ and $\Def_{\tilde\Omega}$. Moreover, by Theorem \ref{teo sing quadr}, since the germ which is a hull of $\Def_{\tilde\Omega}$ has quadratic algebraic singularities, the same is true for the hull of  $\Def_{\Eps}^0$.
\end{proof}


\begin{thebibliography}{wid}
\bibitem[A]{A2} M. Artin, \emph{On solutions to analytic equation,} Invet. Math., vol 5, (1968), 277-291.
\bibitem[F]{Fuk} K. Fukaya, \emph{Deformation theory, homological algebra and mirror symmetry,} Geometry and physics of branes (Como, 2001), Ser. High Energy Phys. Cosmol. Gravit., IOP Bristol, (2003), 121-209. 
\bibitem[G-M88]{Goldman-Millson 1} W. Goldman, J. Millson, \emph{The deformation theory of representations of foundamental groups of compact K\"{a}hler manifolds,} Publ. Math. I.H.E.S., 67, (1988), 43-96.
\bibitem[G-M90]{Goldman-Millson 2} W. Goldman, J. Millson, \emph{The homotopy invariance of the Kuranishi space,} Illinois Journal of Math., 34, (1990), 337-367. 
\bibitem[Hart]{Hartshorne} R. Hartshorne, \emph{Algebraic Geometry,} Graduate text in mathematics, 52, Springer, (1977).  
\bibitem[LePoi]{Le Poitier} J. Le Poitier, \emph{Lectures on vector bundles,} Cambridge studies in advanced mathematics, 54, Cambridge University Press, (1997). 
\bibitem[Man99]{Man Pisa} M. Manetti, \emph{Deformation theory via differential graded Lie algebras,} Seminari di Geometria Algebrica 1998-1999, Scuola Normale Superiore (1999).
\bibitem[Man05]{Man Seattle} M. Manetti, \emph{Differential graded Lie algebras and formal deformation theory,} notes of a course at AMS Summer Institute on Algebraic Geometry, Seattle, (2005). 
\bibitem[Man07]{Man sottovar} M. Manetti, \emph{Lie description of higher obstructions to deforming submanifolds,} Ann. Scuola Norm. Sup. Pisa, (5), Vol. VI, (2007), 631-659. 
\bibitem[N]{Nadel} A. M. Nadel, \emph{Singularities and Kodaira dimension of the moduli space of flat Hermitian-Yang-Mills connections,} Comp. Math., 67, (1988), 121-128.
\bibitem[Nor]{Norton} A. Norton, \emph{Analytic moduli of complex vector bundles,} Indiana Univ. Math. J., 28, (1979), 365-387. 
\bibitem[S]{Schlessinger} M. Schlessinger, \emph{Functors of Artin rings}, Trans. Amer. Math. Soc., 130, (1968), 205-295.
\bibitem[Ser]{Sernesi libro} E. Sernesi, \emph{Deformation of algebraic schemes,} Springer, (2006).
\bibitem[Siu]{Siu} Y. T. Siu, \emph{Complex-analyticity of harmonics maps, vanishing and Lefschetz theorems}, J. Diff. Geom., 17, (1982), 55-138.
\bibitem[V]{Vakil} R. Vakil, \emph{Murphy's law in algebraic geometry: badly-behaved deformation spaces,} Preprint, arXiv: math.AG/0411469. 
\end{thebibliography}
\end{document}